\newcounter{minutes}
\newcounter{hours}
\dedicatory{}
\theoremstyle{plain}
\newtheorem{thm}[equation]{Theorem}
\theoremstyle{definition}
\theoremstyle{remark}
\newtheorem{rem}[equation]{Remark}
\newtheorem{nonsec}[equation]{}
\numberwithin{equation}{section}
\newcommand{\beq}{\begin{equation}}
\newcommand{\eeq}{\end{equation}}
\newcommand{\ben}{\begin{enumerate}}
\newcommand{\een}{\end{enumerate}}
\newcommand{\bequu}{\begin{eqnarray*}}
\newcommand{\eequu}{\end{eqnarray*}}
\newcommand{\bequ}{\begin{eqnarray}}
\newcommand{\eequ}{\end{eqnarray}}
\newcommand{\B}{\mathbb{B}^2}
\newcommand{\sh}{\,\textnormal{sh}}
\newcommand{\ch}{\,\textnormal{ch}}
\begin{document}
\def\thefootnote{}

\title[]{On cyclic quadrilaterals \\ in euclidean and hyperbolic geometries}
\author[G. Wang]{Gendi Wang}
\address{%Department of Mathematical Sciences,
School of Science, Zhejiang Sci-Tech University,
         Hangzhou 310018, China}
\email{gendi.wang@zstu.edu.cn}
\author[M. Vuorinen]{Matti Vuorinen}
\address{Department of Mathematics and Statistics, University of Turku,
        Turku 20014, Finland}
\email{vuorinen@utu.fi}
\author[X. Zhang]{Xiaohui Zhang}
\address{%Department of Mathematical Sciences,
School of Science, Zhejiang Sci-Tech University,
       Hangzhou 310018, China}
\email{xiaohui.zhang@zstu.edu.cn}

\date{}

%%%%%%%%%%%%%%%%%%%%%%%%%%%%%%%%%%%%%
\begin{abstract}
Four points ordered in the positive order on the unit circle determine the vertices of a quadrilateral,
which is considered either as a euclidean or as a hyperbolic quadrilateral depending on whether the lines connecting the vertices are euclidean or hyperbolic lines.
In the case of hyperbolic lines, this type of quadrilaterals are called ideal quadrilaterals.
Our main result gives a euclidean counterpart of an earlier result on the hyperbolic distances between the opposite sides of ideal quadrilaterals.
The proof is based on computations involving hyperbolic geometry.
We also found a new formula for the hyperbolic midpoint of a hyperbolic geodesic segment in the unit disk.
As an application of some geometric properties, we provided a euclidean construction of the symmetrization of random four points on the unit circle with respect to a diameter which preserves the absolute cross ratio of quadruples.
\end{abstract}

\keywords{quadruple, quadrilateral, hyperbolic midpoint,  hyperbolic geometry}
\subjclass[2010]{51M09, 51M15}
%%%%%%%%%%%%%%%%%%%%%%%%%%%%%%%%%%%%%

\maketitle

\footnotetext{\texttt{{\tiny File:~\jobname .tex, printed: \number\year-%
\number\month-\number\day, \thehours.\ifnum\theminutes<10{0}\fi\theminutes}}}
\makeatletter

\makeatother

% Text of article.

%%%%%%%%%%%%%%%%%%%%%%%%%%%%%%%%%%%%%
%%%%%%%%%%%%%%%%%%%%%%%%%%%%%%%%%%%%%
%%%%%%%%%%%%%%%%%%%%%%%%%%%%%%%%%%%%%
\section{Introduction}

In complex analysis, quadruples of points have a very special role:
the absolute cross ratio of four points $a\,,b\,,c\,, d$ in the complex plane $\mathbb{C}$,
\begin{equation}\label{myaxrat}
|a,b,c,d| = \frac{|a-c||b-d|}{|a-b||c-d|}
\end{equation}
is preserved under M\"obius transformations.
This fact is of fundamental importance to geometry and, in particular,
to the hyperbolic geometry of the unit disk $ \mathbb{B}^2$ and of the upper half plane $ \mathbb{H}^2\,.$
Indeed, the hyperbolic metric of both domains can be defined in terms of the absolute cross ratio.

Our goal is to study ordered quadruples $(a,b,c,d)$ of points on the unit circle,
points being listed  in the  order they occur when we traverse the unit circle in the positive direction.
These points are vertices of a quadrilateral.
We analyse the points of intersection of the lines through opposite sides of the quadrilateral and interpret our observations in terms of the euclidean or the hyperbolic geometry.
%The classical facts about triangles will be useful here because our work includes study of triangles with vertices taken from the set of these points of intersection.

In the course of this work we discover a number of formulas which are very useful for our work and which,
as far as we know, are new --- at least we have not found them in literature.
We make frequent use of the well-known formula \eqref{LIS} which gives an expression for the point of intersection
$LIS[a,b,c,d]$ of the two lines $L[a,b]$ and $L[c,d]$, through $a\,,b$ and $c,d\,,$ respectively.

Quadrilaterals in $\B$ whose vertices are on the unit circle $\partial \B$ and whose boundary consists of four circular arcs,
orthogonal to the unit circle, are called ideal hyperbolic quadrilaterals.
An ideal hyperbolic quadrilateral can be divided into four Lambert quadrilaterals.
In \cite{vw2, w}, the authors studied the bounds of the two adjacent sides of a hyperbolic Lambert quadrilateral in the unit disk.
In particular, the following sharp inequalities hold for the product and the sum of the hyperbolic distances
between the opposite sides of an ideal hyperbolic quadrilateral \cite{vw2}.

Let $J^*[a,b]$ denote the hyperbolic line through two points $a,b\in \partial\B$.
For $A, B \subset \B$, let
$$d_{\rho}(A,B)=\inf_{x\in A, y\in B}\rho_{\mathbb{B}^2}(x,y),$$
where $\rho_{\B}$ is the hyperbolic metric in the unit disk defined as \eqref{myrho}.

%===============================================================================
\begin{thm}\label{dd}\cite[Corollary 1.4]{vw2}
Let $Q(a,b,c,d)$ be an ideal hyperbolic quadrilateral in $\B$.
Let $d_1=d_\rho(J^*[a,d],J^*[b,c])$ and $d_2=d_\rho(J^*[a,b], J^*[c,d])$ (see Figure~\ref{idealfigb}).
Then
\begin{eqnarray*}
d_1d_2\leq (2\log(\sqrt{2}+1))^2
\end{eqnarray*}
and
\begin{eqnarray*}
d_1+d_2\geq 4\log(\sqrt{2}+1)\,.
\end{eqnarray*}
In both cases equalities hold if and only if $|a,b,c,d|=2$.
\end{thm}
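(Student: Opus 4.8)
The plan is to reduce the problem to a single real parameter using Möbius invariance, and then to settle both inequalities by one-variable calculus. Since hyperbolic distance and the absolute cross ratio $m:=|a,b,c,d|$ are invariant under the Möbius self-maps of $\B$, I would normalize by a map carrying $\B$ onto the upper half plane $\UH$ and sending the four boundary points, in their cyclic order, to $0,1,\lambda,\infty$; a direct check gives $\lambda=m$ (the infinities in \eqref{myaxrat} cancel). In this model $J^*[a,d],J^*[b,c]$ become the ``nested'' pair $\{0,\infty\},\{1,\lambda\}$, while $J^*[a,b],J^*[c,d]$ become the ``adjacent'' pair $\{0,1\},\{\lambda,\infty\}$. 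Each such pair is ultraparallel, so the distance is realized along the unique common perpendicular; computing the two feet in each case and applying $\cosh\rho_{\UH}(z_1,z_2)=1+|z_1-z_2|^2/(2\,\Im z_1\,\Im z_2)$ yields
\begin{equation*}
\cosh d_1=\frac{m+1}{m-1},\qquad \cosh d_2=2m-1,\qquad m\in(1,\infty),
\end{equation*}
where $m>1$ is exactly Ptolemy's theorem for the cyclic quadrilateral. (As a check, the square has $m=2$ and $d_1=d_2=2\log(\sqrt2+1)$.)

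Next I would eliminate $m$. The two formulas are equivalent to $m=\coth^2(d_1/2)=\cosh^2(d_2/2)$, i.e. to the single symmetric relation $\cosh(d_2/2)=\coth(d_1/2)$. Setting $\sqrt m=\cosh\tau$ with $\tau>0$ puts everything on one parameter,
\begin{equation*}
d_2=2\tau,\qquad d_1=2\log\coth(\tau/2),
\end{equation*}
and the symmetric case $m=2$ becomes $\tau=L:=\log(\sqrt2+1)$, where $\coth(L/2)=\sqrt2+1=e^{L}$.

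Both assertions are now one-variable statements. For the sum, $S(\tau):=d_1+d_2=2\tau+2\log\coth(\tau/2)$ satisfies $S'(\tau)=2(\sinh\tau-1)/\sinh\tau$, which is negative for $\tau<L$ and positive for $\tau>L$; hence $S$ attains its global minimum $S(L)=4L$ at $\tau=L$, proving $d_1+d_2\ge 4\log(\sqrt2+1)$. For the product I would study $p(\tau):=\tfrac14 d_1d_2=\tau\log\coth(\tau/2)$, with $p'(\tau)=\log\coth(\tau/2)-\tau/\sinh\tau$. One has $p'(L)=0$, and the crux is to show this is the only sign change, from $+$ to $-$. Here $p''$ has the sign of $k(\tau):=\tau\cosh\tau-2\sinh\tau$; since $k(0)=0$, $k''(\tau)=\tau\cosh\tau>0$ and $k'(0)=-1<0$, the function $k$ is negative up to a single zero and positive afterwards, so $p'$ first decreases and then increases. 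Combined with $p'(0^+)=+\infty$ and $p'(\infty)=0^{-}$ (both terms vanish, with $\log\coth(\tau/2)-\tau/\sinh\tau\sim 2(1-\tau)e^{-\tau}$), this forces $p'>0$ on $(0,L)$ and $p'<0$ on $(L,\infty)$. Thus $p$ has its global maximum at $\tau=L$, giving $d_1d_2\le 4p(L)=4L^2=(2\log(\sqrt2+1))^2$.

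The equality discussion is then immediate: equality in either inequality holds only at $\tau=L$, i.e. $m=\cosh^2 L=2$, which is precisely $|a,b,c,d|=2$. The one genuinely delicate step is the product: unlike the sum, the sign of $p'$ is not transparent, and the argument hinges on controlling the auxiliary function $k$ together with the boundary value $p'(\infty)=0^{-}$, which is exactly what excludes a second critical point. I expect this monotonicity analysis to be the main obstacle; the rest is normalization and direct computation.
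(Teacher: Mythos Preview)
Your argument is correct. Note, however, that the present paper does not prove this theorem: it is quoted from \cite[Corollary~1.4]{vw2}, and the only trace of the original argument appearing here is the pair of identities
\[
d_1 = 2\,\arth\sqrt{1/|a,b,c,d|},\qquad d_2 = 2\,\arth\sqrt{1/|d,a,b,c|},
\]
equivalently $\th^2(d_1/2)+\th^2(d_2/2)=1$, invoked in the proof of Theorem~\ref{euclamb}. Your formulas $\cosh d_1=(m+1)/(m-1)$ and $\cosh d_2=2m-1$ are exactly these identities, and your substitution $\sqrt{m}=\cosh\tau$ corresponds to the parametrization $r'=\th\tau$, $r=\sech\tau$ in the paper's $r^2+r'^2=1$ setup; so the reduction to one variable matches what is cited. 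The subsequent calculus is sound: the sum is immediate from $S'(\tau)=2(\sinh\tau-1)/\sinh\tau$, and for the product your control of $p'$---via convexity of $k(\tau)=\tau\cosh\tau-2\sinh\tau$ forcing $p'$ to be unimodal, combined with $p'(\tau)\sim 2(1-\tau)e^{-\tau}\to 0^{-}$---correctly yields a unique critical point at $\tau=L$, which is the genuinely delicate step.
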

%===============================================================================

We will analyse this further and our discoveries here consist of finding  counterparts  of these formulas for quadrilaterals
obtained from ideal quadrilaterals by replacing the sides, circular arcs, with the euclidean segments with the same endpoints.

%==========================================================================
\begin{figure}[h]
\begin{minipage}[t]{0.45\linewidth}
\centering
\includegraphics[width=8cm]{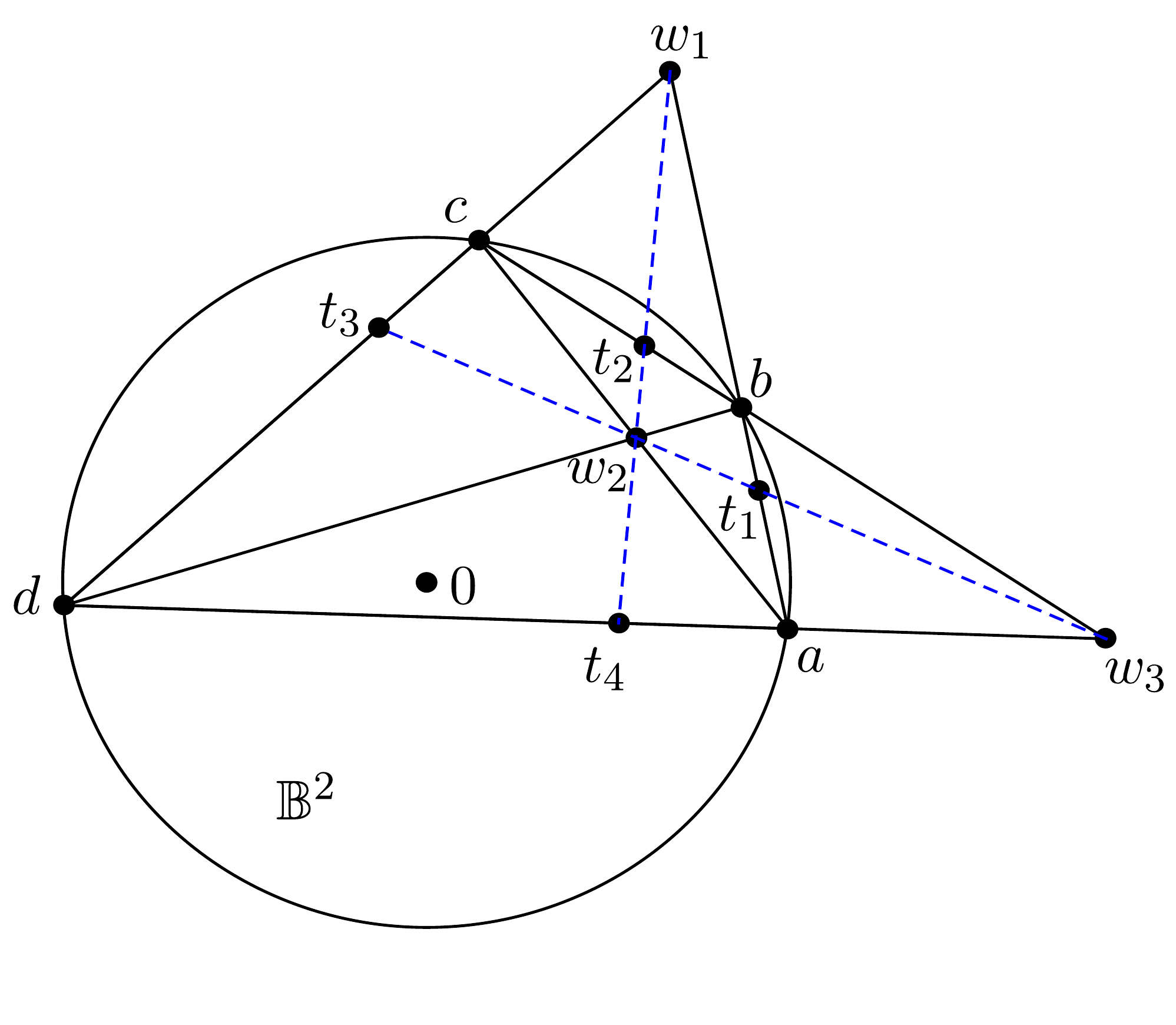}
\caption{\label{cyclicfig12}}
\end{minipage}
\hfill
\begin{minipage}[t]{0.45\linewidth}
\centering
\includegraphics[width=6cm]{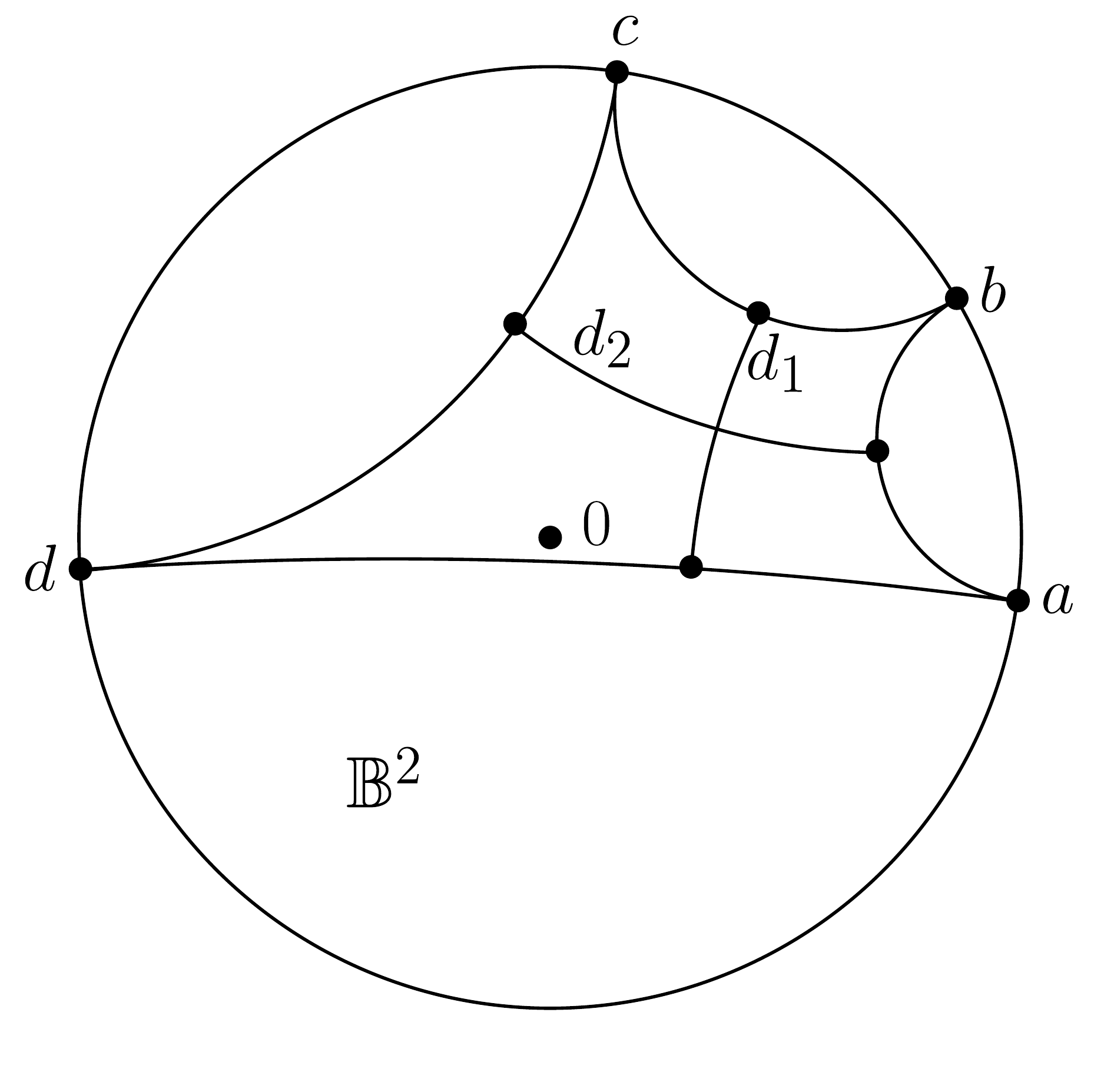}
\caption{\label{idealfigb}}
\end{minipage}
\end{figure}
%==========================================================================

%===========================================================================================
\begin{thm} \label{euclamb}
Let $w_1= LIS[a,b,c,d]\,,$ $w_2= LIS[a,c,b,d]\,,$ $w_3= LIS[a,d,b,c]\,.$
Let $t_1=L[w_3,w_2] \cap L[a,b]\,,$  $t_2=L[w_1,w_2] \cap L[b,c]\,,$ $t_3=L[w_3,w_2] \cap L[c,d]\,,$ $t_4=L[w_1,w_2] \cap L[a,d]\,$ (see Figure~\ref{cyclicfig12}).
Let
$d_1=d_{\rho}(J^*[a,d],J^*[b,c])\,,$ $d_2=d_{\rho}(J^*[a,b],J^*[c,d])\,$ (see Figure~\ref{idealfigb}).
Then
$$\frac{{\rm sh}\,\frac12\rho_{\B}(w_2,t_2)+{\rm sh}\,\frac12\rho_{\B}(w_2,t_4)}{{\rm sh}\,\frac12\rho_{\B}(t_2,t_4)}={\rm th}\,\frac{d_1}2\,,$$
$$\frac{{\rm sh}\,\frac12\rho_{\B}(w_2,t_1)+{\rm sh}\,\frac12\rho_{\B}(w_2,t_3)}{{\rm sh}\,\frac12\rho_{\B}(t_1,t_3)}={\rm th}\,\frac{d_2}2\,,$$
and
$${\rm th}^2\frac{d_1}2+{\rm th}^2\frac{d_2}2=1.$$
\end{thm}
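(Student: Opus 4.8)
The plan is to convert every hyperbolic half‑distance into euclidean data by means of the identity
\[
\sinh\tfrac12\rho_{\B}(x,y)=\frac{|x-y|}{\sqrt{(1-|x|^2)(1-|y|^2)}},\qquad x,y\in\B,
\]
which follows from $\tanh\frac12\rho_{\B}(x,y)=|x-y|/|1-\bar x y|$ together with the elementary identity $|1-\bar x y|^2-|x-y|^2=(1-|x|^2)(1-|y|^2)$. Writing $\kappa(p)=\sqrt{1-|p|^2}$ and using that $t_2,w_2,t_4$ are collinear on $L[w_1,w_2]$ with $w_2$ between them, so that $|t_2-t_4|=|w_2-t_2|+|w_2-t_4|$, the first ratio collapses to the purely euclidean quantity
\[
\frac{\sinh\frac12\rho_{\B}(w_2,t_2)+\sinh\frac12\rho_{\B}(w_2,t_4)}{\sinh\frac12\rho_{\B}(t_2,t_4)}=\frac{r\,\kappa(t_4)+s\,\kappa(t_2)}{(r+s)\,\kappa(w_2)},\quad r=|w_2-t_2|,\ s=|w_2-t_4|,
\]
and likewise for the second ratio with $t_1,t_3$ on $L[w_3,w_2]$. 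Thus the entire theorem reduces to evaluating a rational expression in the $\kappa$'s attached to the complete quadrilateral on $\partial\B$.

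First I would settle the right‑hand sides. Normalizing by a Möbius transformation of $\B$ (which preserves both $\rho_{\B}$ and $|a,b,c,d|$), I send the pair of opposite sides defining $d_1$ to the geodesics with endpoints $0,\infty$ and $p,q$ in the upper half‑plane; the common perpendicular is the semicircle $|z|=\sqrt{pq}$, and integrating along it gives $\cosh d_1=(p+q)/(q-p)$, hence $\tanh^2\frac{d_1}{2}=p/q$. Computing $|a,b,c,d|$ in this normalization yields $q/p$, so
\[
\tanh^2\tfrac{d_1}2=\frac1{\lambda},\qquad \tanh^2\tfrac{d_2}2=1-\frac1{\lambda},\qquad \lambda:=|a,b,c,d|,
\]
the second relation coming from the analogous normalization for the other pair of sides, equivalently from the cross‑ratio identity $(b-a)(d-c)+(c-b)(d-a)=(c-a)(d-b)$. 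The last assertion of the theorem is then immediate, since $\tanh^2\frac{d_1}{2}+\tanh^2\frac{d_2}{2}=\frac1\lambda+(1-\frac1\lambda)=1$; this is consistent with the equality case $\lambda=2$ of Theorem~\ref{dd}.

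It remains to show that the two euclidean ratios take the values $\sqrt{1/\lambda}$ and $\sqrt{1-1/\lambda}$, i.e. that they equal $\tanh\frac{d_1}{2}$ and $\tanh\frac{d_2}{2}$ with the matching dictated by Figure~\ref{cyclicfig12}. Here I would insert the closed forms of $w_1,w_2,w_3$ from \eqref{LIS}, solve the linear systems for $t_2,t_4$ (resp.\ $t_1,t_3$), and substitute into the reduced expression. To keep the algebra finite I would first apply a rotation and the cross‑ratio‑preserving symmetrization to reduce to the configuration symmetric in a diameter, $a=e^{i\alpha},\,b=e^{i\beta},\,c=e^{-i\beta},\,d=e^{-i\alpha}$ with $0<\alpha<\beta<\pi$. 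In that model $w_1,w_2$ lie on the real diameter, $w_2=\cos\frac{\alpha+\beta}2/\cos\frac{\alpha-\beta}2$, $t_4=\cos\alpha$, $t_2=\cos\beta$ are real, and the reduced ratio simplifies cleanly to $\sqrt{\sin\alpha\sin\beta}/\sin\frac{\alpha+\beta}2$, which is exactly $\sqrt{1-1/\lambda}$ because $\lambda=\sin^2\frac{\alpha+\beta}2/\sin^2\frac{\beta-\alpha}2$; the companion ratio on $L[w_3,w_2]$, whose points $t_1,t_3$ are conjugate across the real axis, yields $\sqrt{1/\lambda}$ by the same computation with the two diameters interchanged.

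The main obstacle is passing from this symmetric model to arbitrary position. Because the $t_i$ and $w_i$ are built from euclidean lines, the two ratios are \emph{not} invariant under general isometries of $\B$, so the reduction to the symmetric case needs genuine justification: either by proving directly that each euclidean ratio is unchanged under the symmetrization (the geometric property I would try to isolate first), or by carrying the full \eqref{LIS}‑based computation in general position and verifying that every dependence except on $\lambda$ cancels. I expect the former to be the cleaner route, and I anticipate that the new formula for the hyperbolic midpoint is precisely what is needed to express $w_2$ and the $t_i$ compactly enough to make that cancellation transparent.
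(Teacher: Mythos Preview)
Your reduction of each ratio to a euclidean expression via $\sinh\frac12\rho_{\B}(x,y)=|x-y|/\sqrt{(1-|x|^2)(1-|y|^2)}$, and your identification $\th^2\frac{d_1}{2}+\th^2\frac{d_2}{2}=1$ from the cross–ratio, are exactly how the paper begins. The divergence is in the main step.

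The gap you name is real and you do not close it. Your computation in the symmetric configuration $a=e^{i\alpha},b=e^{i\beta},c=e^{-i\beta},d=e^{-i\alpha}$ is correct, but the quantities $t_i,w_i$ are defined by \emph{euclidean} chords, so the ratio $(r\,\kappa(t_4)+s\,\kappa(t_2))/((r+s)\,\kappa(w_2))$ has no a priori reason to be invariant under the hyperbolic rotation you use to symmetrize. Your proposed route (a) --- prove invariance of the ratio under symmetrization first --- is circular in practice: the only way to see it depends solely on $|a,b,c,d|$ is to compute it in general position, which is route (b). Your expectation that the hyperbolic midpoint formula (Theorem~\ref{myhmidp}) supplies the missing ingredient is mistaken: that formula is proved independently and plays no role here.

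The paper simply executes route (b). Using unimodularity ($a\bar a=\cdots=1$) the LIS formula collapses, and one obtains closed forms for $t_2,t_4$ directly from $w_1,w_2$. The key simplification you miss is that the two summands in the numerator are \emph{equal}: the paper computes
\[
\sh^2\tfrac12\rho_{\B}(w_2,t_2)=\sh^2\tfrac12\rho_{\B}(w_2,t_4)
=\frac{\bigl((a+d)-(b+c)\bigr)\bigl(bc(a+d)-ad(b+c)\bigr)}{(a-c)(a-d)(b-c)(b-d)},
\]
so the left side is just $2\,\sh\frac12\rho_{\B}(w_2,t_2)/\sh\frac12\rho_{\B}(t_2,t_4)$, and together with the analogous closed form for $\sh^2\frac12\rho_{\B}(t_2,t_4)$ the ratio reduces to a pure cross–ratio expression. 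Your weighted–average form $(r\,\kappa(t_4)+s\,\kappa(t_2))/((r+s)\,\kappa(w_2))$ obscures this equality; noting $\rho_{\B}(w_2,t_2)=\rho_{\B}(w_2,t_4)$ up front is what makes the general computation tractable.
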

%===========================================================================================

The bisection problem in the classical hyperbolic geometry has been studied in \cite[Construction 3.1]{g} and \cite[2.9]{kv}.
Recently, Vuorinen and Wang \cite{vw1} provided several geometric constructions based on euclidean compass and ruler
to find the hyperbolic midpoint of a hyperbolic geodesic segment in the unit disk or in the upper half plane.
In this paper, we give an explicit formula for the hyperbolic midpoint of a hyperbolic geodesic segment in the unit disk.

%===========================================================================================
\begin{thm}\label{myhmidp}
For given $x,y \in \B\,,$ the hyperbolic midpoint $z\in \B$ with
$\rho_{\B}(x,z)=\rho_{\B}(y,z)=\rho_{\B}(x,y)/2$ is given by
\begin{equation}\label{myzformua}
z= \frac{y(1-|x|^2) + x(1-|y|^2)}{1-|x|^2|y|^2 + A[x,y] \sqrt{(1-|x|^2)(1-|y|^2)}}
\end{equation}
where $A[x,y]$ is the Ahlfors bracket defined as \eqref{myahl}.
\end{thm}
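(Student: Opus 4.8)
The plan is to verify the closed form \eqref{myzformua} directly, organizing the computation so that a single algebraic identity, together with the symmetry of the right-hand side in $x$ and $y$, does all the work. I first record the metric facts I will need. From the definition \eqref{myrho} of $\rho_{\B}$ one has $\th\frac{\rho_{\B}(x,y)}{2}=\frac{|x-y|}{|1-\bar x y|}$. Write $A=A[x,y]$ and $P=\sqrt{(1-|x|^2)(1-|y|^2)}$; the Ahlfors bracket \eqref{myahl} satisfies $A^2=|x-y|^2+P^2$, which by the classical identity equals $|1-\bar x y|^2$. Setting $r=\th\frac{\rho_{\B}(x,y)}{2}=|x-y|/A$ and applying the half-argument identity $\th\frac{s}{2}=\frac{1-\sqrt{1-\th^2 s}}{\th s}$ with $s=\rho_{\B}(x,y)/2$, and using $1-r^2=P^2/A^2$, I obtain the key expression
\[
\th\frac{\rho_{\B}(x,y)}{4}=\frac{A-P}{|x-y|}.
\]

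Now let $z=N/M$ be the point in \eqref{myzformua}, with $N=y(1-|x|^2)+x(1-|y|^2)$ and real denominator $M=1-|x|^2|y|^2+AP$. Since $N$, $M$, $A$, $P$ and $|x-y|$ are all invariant under interchanging $x$ and $y$, so is $z$. Hence it suffices to prove $\rho_{\B}(x,z)=\tfrac12\rho_{\B}(x,y)$: swapping $x$ and $y$ then gives $\rho_{\B}(y,z)=\tfrac12\rho_{\B}(x,y)$ as well, and these two equalities are precisely the conditions defining the hyperbolic midpoint. Equivalently, in view of the expression above I must establish
\[
\frac{|x-z|^2}{|1-\bar x z|^2}=\th^2\frac{\rho_{\B}(x,y)}{4}=\frac{(A-P)^2}{|x-y|^2}.
\]

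Because $M$ is real, $\frac{|x-z|^2}{|1-\bar x z|^2}=\frac{|xM-N|^2}{|M-\bar x N|^2}$, and a short manipulation yields the factored forms $xM-N=(1-|x|^2)(x|y|^2-y)+AP\,x$ and $M-\bar x N=(1-|x|^2)(1-\bar x y)+AP$. Thus the claim becomes the identity $|x-y|^2|xM-N|^2=(A-P)^2|M-\bar x N|^2$. The factor $1-|x|^2$ divides both $|xM-N|^2$ and $|M-\bar x N|^2$, and writing $|x-y|^2=A^2-P^2=(A-P)(A+P)$ removes one further factor $A-P$; what remains is a scalar identity in the real variables $1-|x|^2$, $1-|y|^2$, $\Re(\bar x y)$, $A$ and $P$, subject to $P^2=(1-|x|^2)(1-|y|^2)$ and $A^2=|x-y|^2+P^2$, which is checked by direct expansion. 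This scalar reduction is the only genuine obstacle; everything else is bookkeeping, and the $x=0$ case (where $A=1$ and $z=y/(1+\sqrt{1-|y|^2})$) provides a useful consistency check. Finally, $A^2=|x-y|^2+P^2$ forces $A-P<|x-y|$, so $\frac{|x-z|}{|1-\bar x z|}=\frac{A-P}{|x-y|}<1$ and hence $z\in\B$; taking $2\arth$ then gives $\rho_{\B}(x,z)=2\arth\bigl(\th\frac{\rho_{\B}(x,y)}{4}\bigr)=\tfrac12\rho_{\B}(x,y)$, which completes the argument.
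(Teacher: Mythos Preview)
Your proof is correct, and it follows a genuinely different route from the paper's. The paper does not verify the formula directly; instead it builds on the geometric machinery of Section~\ref{go}. From the quadratic equation \eqref{mysympt} it deduces that the hyperbolic midpoint of $J[0,w_2]$ equals $w_2/(1+\sqrt{1-|w_2|^2})$, then invokes an external result (\cite[Proposition~3.1]{vw2} and \cite[Lemma~4.6(2)]{vw1}) stating that the hyperbolic midpoint of $J[x,y]$ coincides with the hyperbolic midpoint of $J[0,u]$ for the explicit point $u=\bigl(y(1-|x|^2)+x(1-|y|^2)\bigr)/(1-|x|^2|y|^2)$, and finally simplifies $u/(1+\sqrt{1-|u|^2})$ using the Ahlfors bracket. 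Your argument, by contrast, is entirely self-contained: you compute $\th\tfrac{\rho_{\B}(x,y)}{4}=(A-P)/|x-y|$ from the half-argument identity and then show by algebra that $|x-z|/|1-\bar{x}z|$ equals this quantity, exploiting the $x\leftrightarrow y$ symmetry of $z$ to get the second half for free.

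What each approach buys: the paper's proof explains \emph{why} the formula has the shape it does---it is visibly $u/(1+\sqrt{1-|u|^2})$, the ``half'' of the point $u$ along the diameter---and it ties the result to the rest of the paper's quadrilateral story; on the other hand it depends on two cited results. Your proof needs no outside input and would stand alone, at the cost of being less geometrically transparent. One remark on presentation: you defer the crucial scalar identity to ``direct expansion''. It is indeed true (after factoring out $1-|x|^2$ and one copy of $A-P$, both sides reduce to $2\alpha A^2 P\bigl(AP+\alpha+\beta-\alpha\beta\bigr)$ with $\alpha=1-|x|^2$, $\beta=1-|y|^2$, using $P^2=\alpha\beta$ and $1-2\Re(\bar{x}y)+|x|^2|y|^2=A^2$), but since this is the entire content of the proof, a line or two of that reduction would strengthen the write-up.
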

%===========================================================================================

%%%%%%%%%%%%%%%%%%%%%%%%%%%%%%%%%%%%%
%%%%%%%%%%%%%%%%%%%%%%%%%%%%%%%%%%%%%
%%%%%%%%%%%%%%%%%%%%%%%%%%%%%%%%%%%%%
\section{Preliminary notation}
%%%%%%%%%%%%%%%%%%%%%%%%%%%%%%%%%%%%%
%%%%%%%%%%%%%%%%%%%%%%%%%%%%%%%%%%%%%
%%%%%%%%%%%%%%%%%%%%%%%%%%%%%%%%%%%%%

We will give here some formulas about the geometry of lines and triangles on which our later work is based.
In particular, Euler's formula for the orthocenter of a triangle is very useful.

%===========================================================================================
\begin{nonsec}{\bf Geometry and complex numbers.}
The extended complex plane $\overline{\mathbb{C}}= {\mathbb{C}} \cup \{\infty\}$ is identified with the Riemann sphere via the stereographic projection.
The stereographic projection then can be used to define the chordal distance $q(x,y)$ between points  $x, y \in\overline{\mathbb{C}}$ \cite{b}.
If $f: (G_1, d_1)\to (G_2,d_2)$ is a homeomorphism between metric spaces, then the Lipschitz constant of $f$ is defined by
\begin{equation}\label{myLip1}
\mbox{Lip}(f) = \sup\left\{  \frac{d_2(f(x),f(y))}{d_1(x,y)} \,: x,y \in G_1, x\neq y\right\}.
\end{equation}

%The dot product of two complex numbers is
%\begin{equation}\label{dot}
%z \cdot w = {\rm{Re}}(z \overline{w})= (z \overline{w} +\overline{z} w)/2\,.
%\end{equation}
Let $L[a,b]$ stand for the line through $a$ and $b\,(\neq a)\,.$
For distinct points $a,b,c,d \in {\mathbb{C}}$ such that the lines $L[a,b]$ and  $L[c,d]$ have a unique
point $w$ of intersection, let
$$
w=LIS[a,b,c,d] = L[a,b]\cap L[c,d] \,.
$$
This point is given by
\begin{equation}{\label{LIS}}
w=LIS[a,b,c,d] =\frac{u}{v},
\end{equation}
where
\begin{equation}\label{Lineint}
\begin{cases}
{\displaystyle
{u= (\overline{a}b -a  \overline{b})(c-d)-(a-b) ( \overline{c}d -c  \overline{d}) }}&\\
{\displaystyle {v = ( \overline{a}- \overline{b})(c-d)-(a-b) (\overline{c} - \overline{d})\,.}}&
\end{cases}
\end{equation}

Let $C[a,b,c]$  be the   circle  through distinct non-collinear points $\,a,\,b,\, c\,.$
The formula \eqref{LIS} gives easily the formula for the center $m(a,b,c)$ of $C[a,b,c]\,.$
For instance, we can find two points on the bisecting normal to the side $[a,b]$ and another
two points on the bisecting normal to the side $[a,c]$ and then apply \eqref{LIS}  to get
$m(a,b,c)\,.$
In this way we see that the center $m(a,b,c)$ of  $C[a,b,c]$  is
\begin{equation}\label{mfun}
m(a,b,c)=\frac{ |a|^2(b-c) +  |b|^2(c-a) +  |c|^2(a-b) }
{a(\overline{c}-\overline{b}) +b(\overline{a}-\overline{c})+ c(\overline{b}-\overline{a})}\,.
\end{equation}

Euler's formula gives the orthocenter $o(a,b,c)$ of a triangle with vertices $a\,,b\,,c$ as
\begin{align}\label{eul}
o(a,b,c)
&=a+b+c-2 m(a,b,c)\nonumber\\
&=\frac{\overline{a}(b+c-a)(b-c) +\overline{b}(c+a-b)(c-a) + \overline{c}(a+b-c)(a-b)}{\overline{a}(b-c) +\overline{b}(c-a) + \overline{c}(a-b)}\,.
\end{align}
\end{nonsec}
%===========================================================================================

%===========================================================================================
\begin{nonsec}{\bf M\"obius transformations.}\label{mymob}
A M\"obius transformation is a mapping of the form
$$z \mapsto \frac{az+b}{cz+d}\,, \quad a,b,c,d,z \in {\mathbb C}\,, ad-bc\neq 0\,.$$
The special M\"obius transformation
\begin{equation}\label{myTa}
T_a(z) = \frac{z-a}{1- \overline{a}z}\,, \quad a \in \B \setminus \{0\}\,,
\end{equation}
maps the unit disk $\mathbb{B}^2$ onto itself with $T_a(a) =0\,.$
Its Lipschitz constant \eqref{myLip1} as a mapping $T_a: \B\to \B$ with respect to the euclidean metric is \cite[p.\,43]{b}
\begin{equation}\label{myLip2}
\mbox{Lip}(T_a)= \frac{1+ |a|}{1-|a|}\,.
\end{equation}

We sometimes use the notation $x^* = x/|x|^2 = 1/\overline{x}$ for $x\in {\mathbb C} \setminus \{0\}\,.$
\end{nonsec}
%===========================================================================================

%===========================================================================================
\begin{nonsec}{\bf Hyperbolic geometry.}
We recall some basic formulas and notation for hyperbolic geometry from \cite{b}.
The hyperbolic metric $\rho_{\mathbb{B}^2}$ is defined by
\begin{equation}\label{myrho}
\sh \frac{\rho_{\mathbb{B}^2}(x,y)}{2}= \frac{|x-y|}{\sqrt{(1-|x|^2)(1-|y|^2)}}\,.
\end{equation}
Since
\begin{align*}
|x,x^*,y^*,y|=\frac{|x-y^*||y-x^*|}{|x-x^*||y-y^*|}
%&=\frac{\left|x|y|-\frac y{|y|}\right|\left|y|x|-\frac x{|x|}\right|}{(1-|x|^2)(1-|y|^2)}\\
%&=\frac{1+|x|^2|y|^2-2 x\cdot y}{(1-|x|^2)(1-|y|^2)}\\
&=1+\frac{|x-y|^2}{(1-|x|^2)(1-|y|^2)}\,,
\end{align*}
another form of the formula for the hyperbolic metric follows
$$\ch \frac{\rho_{\mathbb{B}^2}(x,y)}{2}=\sqrt{|x,x^*,y^*,y|}\,.$$

We also use the Ahlfors bracket notation $A[x,y]$ \cite[7.37]{avv}, for $x,y \in {\mathbb{B}^2}$
\begin{equation}\label{myahl}
A[x,y]^2 =(1-|x|^2)(1-|y|^2) + |x-y|^2\,.
\end{equation}

The circle which is orthogonal to the unit circle and contains two distinct points $x,y\in \mathbb{C}$ is denoted by $C[x,y]$.
If $x,y\in {\mathbb{B}^2}$ are distinct points, then $C[x,y]\cap \partial {\mathbb{B}^2} = \{x_*, y_*\}$
where the points are labelled in such a way that $x_*,x,y,y_*$ occur in this order on  $C[x,y]\,.$
We denote by $J[x,y]$ the hyperbolic geodesic segment joining two distinct points $ x,y \in {\mathbb{B}^2}\,.$
Then $J[x,y] $ is a subarc of
%$ C[x,y]\cap{\mathbb{B}^2}\,$ and
the hyperbolic line $J^*[x_*,y_*]= C[x,y]\cap{\mathbb{B}^2}$ .
\end{nonsec}
%===========================================================================================

%%%%%%%%%%%%%%%%%%%%%%%%%%%%%%%%%%%%%
%%%%%%%%%%%%%%%%%%%%%%%%%%%%%%%%%%%%%
%%%%%%%%%%%%%%%%%%%%%%%%%%%%%%%%%%%%%
\section{Geometric Observations}\label{go}
%%%%%%%%%%%%%%%%%%%%%%%%%%%%%%%%%%%%%
%%%%%%%%%%%%%%%%%%%%%%%%%%%%%%%%%%%%%
%%%%%%%%%%%%%%%%%%%%%%%%%%%%%%%%%%%%%

Let  $a,b,c,d \in \partial  \B\,$ be points listed in the order they occur when one traverses the unit circle in the positive direction.
%In the following for clarity we always assume that
%$$0<{\rm Arg}[a]<{\rm Arg}[b]<{\rm Arg}[c]<{\rm Arg}[d]< 2 \pi\,.$$

%===========================================================================================
\begin{thm}\label{myoarcisec}
%Let  $a,b,c,d \in \partial  \B\,,$ be points listed in the order they occur when one traverses the unit circle in the positive direction.

(1) Let
$$w_1= LIS[a,b,c,d]\,,w_2= LIS[a,c,b,d]\,,w_3= LIS[a,d,b,c]\,.$$
Then the point $w_2$ is the orthocenter of the triangle with vertices at the points $0$, $w_1$, $w_3.$

(2)
%Let $w=C[a,c]\cap C[b,d] \in \B\,.$
%The orthogonal arcs, intersecting the unit circle at $a,c$ and $b,d\,,$ resp., have a unique point of intersection given by
The point of intersection of the hyperbolic lines $J^*[a,c]$ and $J^*[b,d]$ is given by
\begin{equation*}\label{myoaisec}%w
Y= \frac{(ac-bd)\pm\sqrt{(a-b)(b-c)(c-d)(d-a)}}{a-b+c-d}\,,
\end{equation*}
where the sign "$+$" or "$-$" in front of the square
%corresponds to the value with absolute value less than
root is chosen such that $|Y|<1\,.$
 \end{thm}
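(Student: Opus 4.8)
The plan is to reduce both assertions to explicit complex-number computations of the intersection points via \eqref{LIS}, using throughout the relations $\overline{a}=1/a$, $\overline{b}=1/b$, $\overline{c}=1/c$, $\overline{d}=1/d$ valid on $\partial\B$. First I would put the three diagonal points in closed form. Substituting $(a,b,c,d)$ into \eqref{LIS} and cancelling the common factor $1/(abcd)$ that appears in both $u$ and $v$ yields
$$w_1 = \frac{cd(a+b) - ab(c+d)}{cd - ab},$$
and the relabellings dictated by the definitions give $w_2 = \tfrac{bd(a+c)-ac(b+d)}{bd-ac}$ and $w_3 = \tfrac{bc(a+d)-ad(b+c)}{bc-ad}$.

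For part (1), with these expressions in hand the identity $w_2 = o(0,w_1,w_3)$ can in principle be checked directly from Euler's formula \eqref{eul} applied to the vertices $0,w_1,w_3$; however, that route forces a heavy expansion, so the argument I would actually present is conceptual. I would invoke the classical projective fact that the diagonal triangle $w_1w_2w_3$ of a complete quadrangle inscribed in a conic is self-polar with respect to that conic, here with the conic $\partial\B$. Self-polarity means each vertex is the pole of the opposite side, and for the unit circle the pole of a line lies on the perpendicular dropped from the origin to that line. Hence $0w_2\perp L[w_1,w_3]$, $0w_1\perp L[w_2,w_3]$ and $0w_3\perp L[w_1,w_2]$. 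Reading these three perpendicularities inside the triangle with vertices $0,w_1,w_3$: the first says $w_2$ lies on the altitude from $0$; the second identifies $L[w_2,w_3]$ as the altitude from $w_3$; the third identifies $L[w_1,w_2]$ as the altitude from $w_1$. Thus $w_2$ lies on all three altitudes and is the orthocenter, as claimed.

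For part (2) I would realize each hyperbolic line as the euclidean circle orthogonal to $\partial\B$ through its ideal endpoints. The circle $C[a,c]$ has center at the pole of the chord, $m_{ac}=2ac/(a+c)$, so that $\overline{m}_{ac}=2/(a+c)$, and orthogonality to the unit circle collapses its equation to the clean form $|z|^2 - z\overline{m}_{ac} - \overline{z}m_{ac} + 1 = 0$; likewise $C[b,d]$ with $m_{bd}=2bd/(b+d)$. Subtracting the two circle equations removes $|z|^2$ and, after clearing $(a+c)(b+d)$, gives the radical axis in the linear form
$$z\,(a-b+c-d) = \overline{z}\,\bigl(ac(b+d)-bd(a+c)\bigr).$$
Solving this for $\overline{z}$ and substituting back into the equation of $C[a,c]$ converts $|z|^2=z\overline{z}$ into a multiple of $z^2$; after cancelling the factor $(a+c)$ one is left with the quadratic
$$(a-b+c-d)\,z^2 - 2(ac-bd)\,z + \bigl(ac(b+d)-bd(a+c)\bigr) = 0.$$

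The quadratic formula then produces the stated $Y$, contingent on the discriminant identity
$$(ac-bd)^2 - (a-b+c-d)\bigl(ac(b+d)-bd(a+c)\bigr) = (a-b)(b-c)(c-d)(d-a).$$
This polynomial identity in $a,b,c,d$, verified by expanding both sides, is the main computational obstacle of the whole theorem; once it is in place, everything else is bookkeeping. Finally, since both orthogonal circles are invariant under the inversion $z\mapsto 1/\overline{z}$, the two roots are inverse points with respect to $\partial\B$, so exactly one of them lies in $\B$; this forces the choice of sign in front of the square root for which $|Y|<1$.
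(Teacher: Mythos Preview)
Your argument is correct in both parts, and for part~(2) it follows essentially the same route as the paper: write the orthogonal circles $C[a,c]$ and $C[b,d]$, subtract to obtain the linear relation expressing $\overline{z}$ in terms of $z$, feed this back into one circle equation to arrive at the quadratic \eqref{mysympt}, and solve. You add two points the paper leaves implicit, namely the discriminant identity and the observation that the two roots are inverse with respect to $\partial\B$, which cleanly explains the sign choice.

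Part~(1), however, is genuinely different. The paper simply substitutes the closed forms for $w_1,w_2,w_3$ into Euler's formula \eqref{eul} with vertices $0,w_1,w_3$ and checks the resulting expression equals $w_2$; it is a direct, if somewhat opaque, computation. You instead invoke the classical projective fact that the diagonal triangle of a cyclic quadrangle is self-polar with respect to the circumscribing conic, and then translate self-polarity into the three perpendicularity relations $0w_i\perp L[w_j,w_k]$, which immediately exhibit $w_2$ as the orthocenter of $\triangle 0w_1w_3$. Your approach is more conceptual and reveals \emph{why} the orthocenter statement holds, at the cost of importing the self-polar theorem; the paper's computation is self-contained but gives less insight. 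Either is a valid proof.
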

%===========================================================================================
%===========================================================================================
\begin{proof}
(1) %Let $LIS[a,b,c,d]=w_1$, $LIS[a,c,b,d]=w_2$, $LIS[a,d,b,c]=w_3$.
Since all the points $a,b,c,d$ are unimodular, i.e., $a \overline{a}=b \overline{b}=c \overline{c}=d \overline{d}=1\,,$
\eqref{LIS} is simplified to
\begin{equation}\label{ULIS}
w_1=L[a,b]\cap L[c,d]=\frac{ab(c+d)-cd(a+b)}{ab-cd}  \,.
\end{equation}
This follows easily if we multiply $u$ and $v$ in \eqref{Lineint} by the product $abcd$ and use unimodularity.

From \eqref{ULIS} there follows that
\begin{equation}\label{myabcd}
w_2=L[a,c]\cap L[b,d]=\frac{ac(b+d)-bd (a+c)}{ac-bd}
\end{equation}
and
\begin{equation}\label{myadbc}
w_3=L[a,d]\cap L[b,c]=\frac{ad(b+c)-bc(a+d)}{ad-bc} \,.
\end{equation}

These formulas together with \eqref{eul} yield
\begin{equation*}\label{myo}
o(0,w_1,w_3)=\frac{\overline{w_1}{w_3}+{w_1}\overline{w_3}}{\overline{w_1}{w_3}-{w_1}\overline{w_3}}(w_3-w_1)=w_2\,.
\end{equation*}
Therefore, the point $w_2$ is the orthocenter of the triangle with  vertices at the points $0$, $w_1$, $w_3\,$, see Figure~\ref{cyclicfig31}.

%==========================================================================
\begin{figure}[H]
\centering
\includegraphics[width=8.2cm]{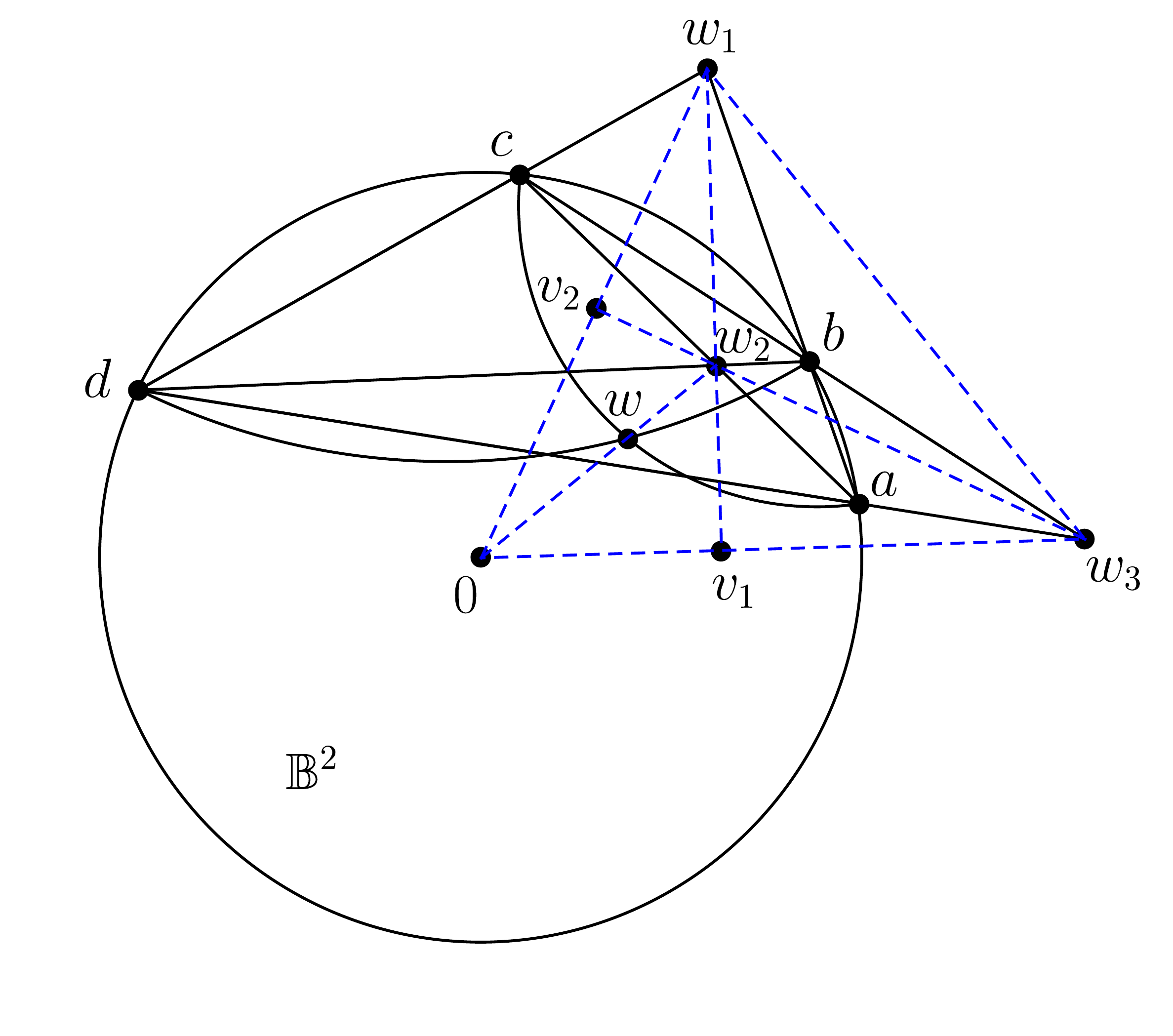}
\caption{\label{cyclicfig31}}
\end{figure}
%==========================================================================

(2)
%Let $C[a,c],\,C[b,d]$ be the orthogonal arcs, intersecting the unit circle at $a,c$ and $b,d\,,$ resp.
Let $z=C[a,c]\cap C[b,d]$ and $m(a,c)$ be the center of $C[a,c]$.
Then
$$\left|\frac{a+c}2\right| |m(a,c)|=1$$
and hence
\begin{equation}\label{myOac}
m(a,c)=\frac{2(a-c)}{a\overline{c}-\overline{a}c}=\frac{2 ac}{a+c}\,.
\end{equation}
By \eqref{mfun}, we have
$$m(a,z,c)=\frac{ac(1-|z|^2)}{(a+c)-z-ac\overline{z}}\,.$$
Since $m(a,c)=m(a,z,c)$, we obtain
$$\frac{ac\overline{z}+z}{a+c}=\frac{ 1+|z|^2}{2}\,.$$

A similar argument yields
$$\frac{bd\overline{z}+z}{b+d}=\frac{ 1+|z|^2}{2}\,.$$

Therefore,
$$\overline{z}=\frac{a-b+c-d}{ac(b+d)-bd(a+c)}z$$
and the point $z$ satisfies the following quadratic equation
\begin{equation}\label{mysympt}
%w^2-2\frac{ac-bd}{a-b+c-d}w+\frac{ac(b+d)-bd(a+c)}{a-b+c-d}=0\,.
(a-b+c-d)z^2-2(ac-bd)z+(ac(b+d)-bd(a+c))=0\,.
\end{equation}
Solving this equation, we have the result and the proof is complete.
\end{proof}
%===========================================================================================

%===========================================================================================
\begin{rem}\label{myhmidps}
Denote
\begin{equation}\label{myisecpts4}
w=J^*[a,c]\cap J^*[b,d].
\end{equation}
Let $u_1=J^*[a,c]\cap J[0,w_2]$ and $u_2=J^*[b,d]\cap J[0,w_2]$.
By \cite[Proposition 3.1]{vw2}, $u_1$ is the hyperbolic midpoint of $J[0,w_2]$ and so is $u_2$.
Hence $u_1=u_2=w$, i.e., the point $w$, intersection of the two hyperbolic lines $J^*[a,c]$ and $J^*[b,d]$,
is the hyperbolic midpoint of the hyperbolic geodesic segment $J[0,w_2]$,
where $w_2$ is the point of intersection of the two euclidean lines $L[a,c]$ and $L[b,d]$.
\end{rem}
%===========================================================================================

%The circle through three non-collinear points $a,b,c$ is denoted by $C[a,b,c]\,.$
%===========================================================================================
\begin{thm}\label{myoarcisec2}
% Let $a,b,c,d \in \partial  \B\,,$ be points as in Theorem \ref{myoarcisec}.
 %and $w_2$ the point in Theorem \ref{myoarcisec} (1).Theorem \ref{myoarcisec} (1)
Let $w_2$ be as \eqref{myabcd}.
The centers of the four circles through the point $w_2$
$$ C[a,b,w_2], C[b,c,w_2], C[c,d,w_2], C[a,d,w_2]$$
form the vertices of a rhomboid.
Moreover, the euclidean center of the rhomboid is the euclidean midpoint of the segment $[0,w_2]$.
\end{thm}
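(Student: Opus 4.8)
The plan is to study the four circumcenters
\[
m_1=m(a,b,w_2),\quad m_2=m(b,c,w_2),\quad m_3=m(c,d,w_2),\quad m_4=m(a,d,w_2)
\]
from \eqref{mfun}, proving first that $m_1m_2m_3m_4$ is a parallelogram and then that its center is $w_2/2$, the euclidean midpoint of $[0,w_2]$.

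For the parallelogram I would argue geometrically, without invoking \eqref{mfun}. Each $m_i$ is the circumcenter of a triangle having $w_2$ as a vertex, so consecutive centers lie on a common perpendicular bisector: $m_1,m_2$ are both equidistant from $b$ and $w_2$, hence $L[m_1,m_2]$ is the perpendicular bisector of $[b,w_2]$; likewise $L[m_2,m_3]$, $L[m_3,m_4]$, $L[m_4,m_1]$ are the perpendicular bisectors of $[c,w_2]$, $[d,w_2]$, $[a,w_2]$. Now $w_2=L[a,c]\cap L[b,d]$ lies on both diagonal lines, so $b,w_2,d$ are collinear and $[b,w_2],[d,w_2]$ lie on the single line $L[b,d]$; their perpendicular bisectors are therefore parallel, giving $L[m_1,m_2]\parallel L[m_3,m_4]$. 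The collinearity of $a,w_2,c$ gives $L[m_2,m_3]\parallel L[m_4,m_1]$ in the same way. Both pairs of opposite sides being parallel, $m_1m_2m_3m_4$ is a parallelogram (a rhomboid).

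For the center I would use that the diagonals of a parallelogram bisect each other, so its center is $\tfrac12(m_1+m_3)=\tfrac12(m_2+m_4)$, whence it suffices to show $m_1+m_3=w_2$. Using $|a|=|b|=1$, formula \eqref{mfun} reduces to $m_1=-ab(1-|w_2|^2)/(ab\overline{w_2}+w_2-a-b)$, and on substituting $w_2=(ac(b+d)-bd(a+c))/(ac-bd)$ together with $\overline{w_2}=(a+c-b-d)/(ac-bd)$ the denominator factors as $(a-b)(b-c)(a-d)/(ac-bd)$; the analogous reduction holds for $m_3$. Feeding in the identity below then collapses the circumcenters to the compact forms
\[
m_1=\frac{ab(c-d)}{ac-bd},\qquad m_3=\frac{cd(a-b)}{ac-bd},
\]
so that $m_1+m_3=(abc+acd-abd-bcd)/(ac-bd)=w_2$, which is exactly the assertion about the center.

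The single genuinely computational step --- and the one I expect to be the main obstacle --- is the symmetric identity
\[
(1-|w_2|^2)(ac-bd)^2=(a-b)(b-c)(c-d)(d-a),
\]
equivalently a clean evaluation of $1-|w_2|^2=1-w_2\overline{w_2}$. I expect it to follow by direct expansion, substituting the expressions for $w_2$ and $\overline{w_2}$ and collecting the degree-four terms on each side. Once this identity is in hand, every cancellation above is forced, and the remaining structure --- the parallelogram and the reduction of the diagonal midpoint to $w_2/2$ --- is a direct consequence of the collinearities produced by $w_2=L[a,c]\cap L[b,d]$.
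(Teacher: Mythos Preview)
Your proof is correct and follows the same route as the paper: the parallelogram argument via perpendicular bisectors of $[a,w_2],[b,w_2],[c,w_2],[d,w_2]$ is identical, and for the center both you and the paper reduce to computing $m_1,m_3$ from \eqref{mfun} and checking $m_1+m_3=w_2$. The paper simply asserts the simplified forms $p_1=ab(c-d)/(ac-bd)$ and $p_3=cd(a-b)/(ac-bd)$ without displaying intermediate steps, so the identity $(1-|w_2|^2)(ac-bd)^2=(a-b)(b-c)(c-d)(d-a)$ that you flag as the main obstacle is implicitly absorbed there; it does hold by direct expansion using $\overline{w_2}=(a+c-b-d)/(ac-bd)$, and once it is in hand your computation coincides with the paper's.
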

%===========================================================================================
%===========================================================================================
\begin{proof}%{\bf Proof of Theorem  \ref{myoarcisec2}.}
Let
$$p_1=m[a,b,w_2],\, p_2=m[b,c,w_2],\, p_3=m[c,d,w_2],\, p_4=m[a,d,w_2]\,.$$
Clearly, the lines $L[p_1,p_2]$ and $L[p_3,p_4]$ are the bisecting normals to the segments $[b,w_2]$ and $[d,w_2]$, respectively,
and hence $L[p_1,p_2]$ is parallel to $L[p_3,p_4]$.
A similar argument yields that $L[p_1,p_4]$ is parallel to $L[p_2,p_3]$.
Therefore, $p_1,\,p_2,\,p_3,\,p_4$ form the vertices of a rhomboid.

Let
$$p_5=LIS[p_1,p_3,p_2,p_4]\,.$$
By \eqref{mfun}, we have
$$p_1=m(a,b,w_2)=\frac{ab(c-d)}{ac-bd}\,\,\,{\rm and}\,\,\,p_3=m(c,d,w_2)=\frac{cd(a-b)}{ac-bd}\,.$$
Hence
$$p_5=\frac12(p_1+p_3)=\frac12 w_2\,.$$
Namely, $p_5$ is the euclidean midpoint of the segment $[0,w_2]$, see Figure~\ref{cyclicfig39b}.
\end{proof}
%===========================================================================================

%==========================================================================
\begin{figure}[h]
\centering
\includegraphics[width=8.2cm]{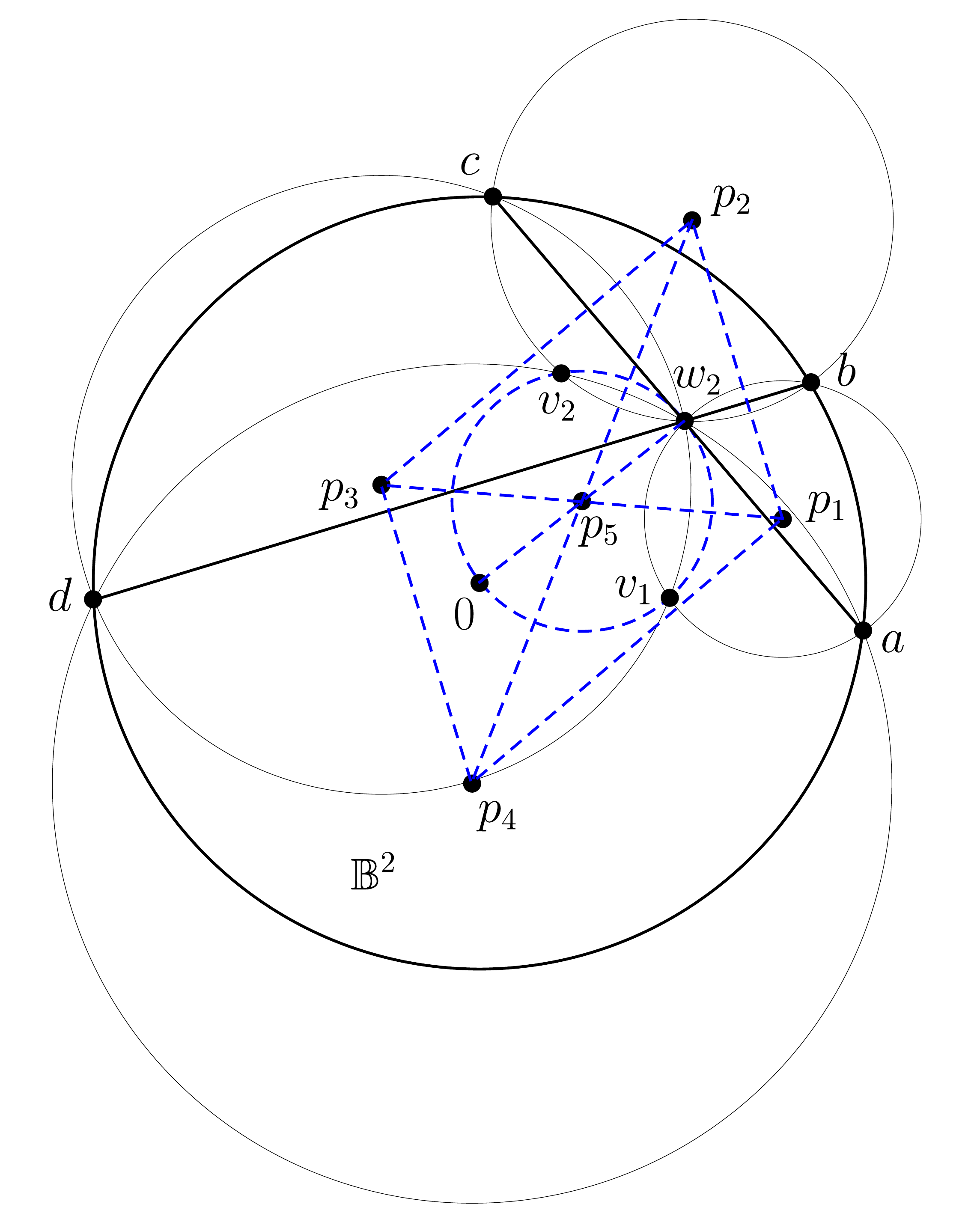}
\caption{\label{cyclicfig39b}  }
\end{figure}
%==========================================================================

%===========================================================================================
\begin{rem}\label{myuv}
Let $w_1\,,w_3$ be as \eqref{ULIS} and \eqref{myadbc}, respectively.
Denote
\begin{equation*}\label{myisecpts3}
\{v_1, w_2\}= C[a,b,w_2]\cap C[c,d,w_2]\,,
\end{equation*}
\begin{equation*}\label{myisecpts2}
\{v_2, w_2\}=C[b,c,w_2]\cap C[a,d,w_2]\,.
\end{equation*}
Since $p_5$ is on $L[p_1,p_3]$ and $L[p_2,p_4]$ which are the bisecting normals to the segments  $[v_1,w_2]$ and $[v_2,w_2]$, respectively, by Theorem \ref{myoarcisec2}
$$|p_5-v_1|=|p_5-v_2|=|p_5-w_2|=|p_5-0|=\frac12|w_2|\,.$$
Hence $0, v_1, w_2, v_2$ are on the circle centered at $p_5$ with diameter $[0,w_2]$.

By intersecting-secants theorem, $w_3\,,w_2\,,v_2$ are collinear and so are $w_1\,,w_2\,,v_1$, see Figure~\ref{cyclicfig31} and Figure~\ref{cyclicfig39b}.
Since $[0,v_2]$ is perpendicular to $[w_2,v_2]$, together with Theorem \ref{myoarcisec}(1),
% is the orthocenter of the triangle $\Delta(0,w_1,w_3)\,,$
we have that $0,v_2,w_1$ are collinear.
A similar argument yields that $0,v_1,w_3$ are collinear.
\end{rem}
%===========================================================================================

%===========================================================================================
\begin{thm}\label{myoccen}%in Theorem \ref{myoarcisec}
Let $w_1\,,w_3\,,w$ be as \eqref{ULIS}, \eqref{myadbc} and \eqref{myisecpts4}, respectively.
Then the circle orthogonal to the unit circle goes through $w$ if and only if the center of the circle is on the line $L[w_1,w_3]$.
\end{thm}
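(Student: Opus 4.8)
The plan is to describe all circles orthogonal to $\partial\B$ passing through $w$ and to recognize the locus of their centers. Recall that a circle is orthogonal to the unit circle precisely when the power of the origin with respect to it equals $1$ (equivalently $|m|^2=r^2+1$ for center $m$ and radius $r$), and that an orthogonal circle is invariant under the inversion $z\mapsto z^*=1/\overline z$ in $\partial\B$. Thus if an orthogonal circle passes through $w$, it also passes through $w^*=1/\overline w$, so its center is equidistant from $w$ and $w^*$ and hence lies on the perpendicular bisector $\ell$ of the segment $[w,w^*]$. Conversely, since $w^*=w/|w|^2$ is a positive multiple of $w$, the points $0,w,w^*$ are collinear with $|w|\,|w^*|=1$; therefore every circle through $w$ and $w^*$ has power $1$ at the origin and is automatically orthogonal to $\partial\B$. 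Consequently the centers of the orthogonal circles through $w$ are exactly the points of $\ell$, and the theorem reduces to the identity $\ell=L[w_1,w_3]$.

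To locate $\ell$, note by Remark \ref{myhmidps} that the point $w$ of \eqref{myisecpts4} is the hyperbolic midpoint of $J[0,w_2]$; as $J[0,w_2]$ is the Euclidean segment $[0,w_2]$, the point $w$, and hence also $w^*$, lies on the line $L[0,w_2]$, so $\ell\perp L[0,w_2]$. To pin down $\ell$ it remains to find its intersection with $L[0,w_2]$, namely the midpoint of $[w,w^*]$. Applying Theorem \ref{myhmidp} with $x=0$, $y=w_2$, where $A[0,w_2]=1$, gives $w=w_2/(1+\sqrt{1-|w_2|^2})$; writing $k=\sqrt{1-|w_2|^2}$ and using $|w_2|^2+k^2=1$, a direct simplification yields $\tfrac12(w+w^*)=w_2/|w_2|^2=w_2^*$. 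Thus $\ell$ is the line through $w_2^*$ orthogonal to $L[0,w_2]$, that is, the polar of $w_2$ with respect to $\partial\B$.

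Finally I would identify $\ell$ with $L[w_1,w_3]$. By Theorem \ref{myoarcisec}(1) the point $w_2$ is the orthocenter of the triangle $0,w_1,w_3$, so the altitude from the vertex $0$ passes through $0$ and $w_2$ and is perpendicular to the opposite side; hence $L[w_1,w_3]\perp L[0,w_2]$, matching the direction of $\ell$. It therefore suffices to verify the single incidence $w_2^*\in L[w_1,w_3]$. Using \eqref{ULIS} and \eqref{myadbc} for $w_1,w_3$ together with the unimodularity of $a,b,c,d$ to compute $w_2^*=1/\overline{w_2}=(ac-bd)/\bigl((a+c)-(b+d)\bigr)$, this amounts to checking that $(w_1-w_2^*)\,\overline{(w_3-w_2^*)}$ is real; equivalently it is the classical fact that the diagonal triangle $w_1w_2w_3$ of the quadrangle $a,b,c,d$ inscribed in $\partial\B$ is self-polar, so that the polar of $w_2$ is $L[w_1,w_3]$. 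Since $\ell$ and $L[w_1,w_3]$ are both orthogonal to $L[0,w_2]$ and both pass through $w_2^*$, they coincide, which finishes the proof. The main obstacle is precisely this last incidence $w_2^*\in L[w_1,w_3]$; everything else follows from invariance of orthogonal circles under inversion and from the already established orthocenter and hyperbolic-midpoint formulas.
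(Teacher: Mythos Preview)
Your argument is correct, but it takes a different route from the paper. The paper works directly with two \emph{known} centers on $\ell$: since $C[a,c]$ and $C[b,d]$ are orthogonal circles through $w$, their centers $c_1=m(a,c)$ and $c_2=m(b,d)$ lie on $\ell$, and the paper then verifies the single real-ratio check $\dfrac{c_2-c_1}{w_1-c_1}\in\mathbb{R}$ (and similarly for $w_3$) by an explicit unimodular computation using \eqref{ULIS} and \eqref{myOac}. Your approach instead identifies $\ell$ intrinsically as the polar of $w_2$, then matches it with $L[w_1,w_3]$ via the orthocenter relation (Theorem~\ref{myoarcisec}(1)) for the direction and the classical self-polar property of the diagonal triangle for the incidence $w_2^*\in L[w_1,w_3]$. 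Your route is more conceptual and ties the result to projective geometry; the paper's route is shorter and fully self-contained, avoiding both the forward reference to Theorem~\ref{myhmidp} and the appeal to the self-polar fact.

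Two minor points. First, invoking Theorem~\ref{myhmidp} is logically harmless here (its proof does not use Theorem~\ref{myoccen}), but it is a forward reference in the paper's ordering; you could avoid it by quoting directly the relation $w_2=\dfrac{2w}{1+|w|^2}$, which follows at once from \eqref{mysympt} and Remark~\ref{myhmidps}. Second, you correctly flag the incidence $w_2^*\in L[w_1,w_3]$ as the crux; if you prefer not to cite the self-polar theorem, the computation is short: with $w_2^*=\dfrac{ac-bd}{a+c-b-d}$, $w_1$ from \eqref{ULIS}, and $w_3$ from \eqref{myadbc}, one checks $\dfrac{w_1-w_2^*}{w_3-w_2^*}\in\mathbb{R}$ by the same unimodularity trick the paper uses for $\dfrac{c_2-c_1}{w_1-c_1}$.
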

%===========================================================================================
%===========================================================================================
\begin{proof}
Let $c_1=m(a,c)$ and $c_2=m(b,d)$.
%Then $c_1\,,w_1\,,c_2\,,w_3$ are collinear.
Apparently, the circle orthogonal to the unit circle goes through $w$
if and only if the center of the circle is on the bisecting normal $L[c_1,c_2]$ to the segment $[w,1/\overline{w}]$.
It suffices to show that the four points $c_2$, $w_1$, $c_1$, $w_3$ are collinear, see Figure~\ref{cyclicfig311b}.
%$L[w_1,w_2]=L[c_1,c_2]$.

By \eqref{ULIS} and \eqref{myOac},
%$$m(a,c)=\frac{2ac}{a+c}\,\,\,{\rm and}\,\,\,m(b,d)=\frac{2bd}{b+d}\,.$$
$$\frac{c_2-c_1}{w_1-c_1}=\frac{2(ab-cd)}{(a-c)(b+d)}\,.$$
Since $a\overline a=b\overline b=c\overline c=d\overline d=1$, we have
$$\overline{\left(\frac{ab-cd}{(a-c)(b+d)}\right)}=\frac{abcd(\overline a\overline b-\overline c\overline d)}{abcd(\overline a-\overline c)(\overline b+\overline d)}
%=\frac{cd-ab}{(c-a)(b+d)}
=\frac{(ab-cd)}{(a-c)(b+d)}\,.$$
Therefore, $\frac{c_2-c_1}{w_1-c_1}\in \mathbb{R}$ and hence $w_1\in L[c_1,c_2]$.
A similar argument yields $w_3\in L[c_1,c_2]$. Hence, the four points $c_2$, $w_1$, $c_1$, $w_3$ are collinear.
%Clearly, $L[c_1,c_2]$ is the bisecting normal to the  segment $[w,1/\overline{w}]$.
%Namely, $L[w_1,w_3]$ is also the bisecting normal to the  segment $[w,1/\overline{w}]$.

%==========================================================================
\begin{figure}[h]
\centering
\includegraphics[width=8.2cm]{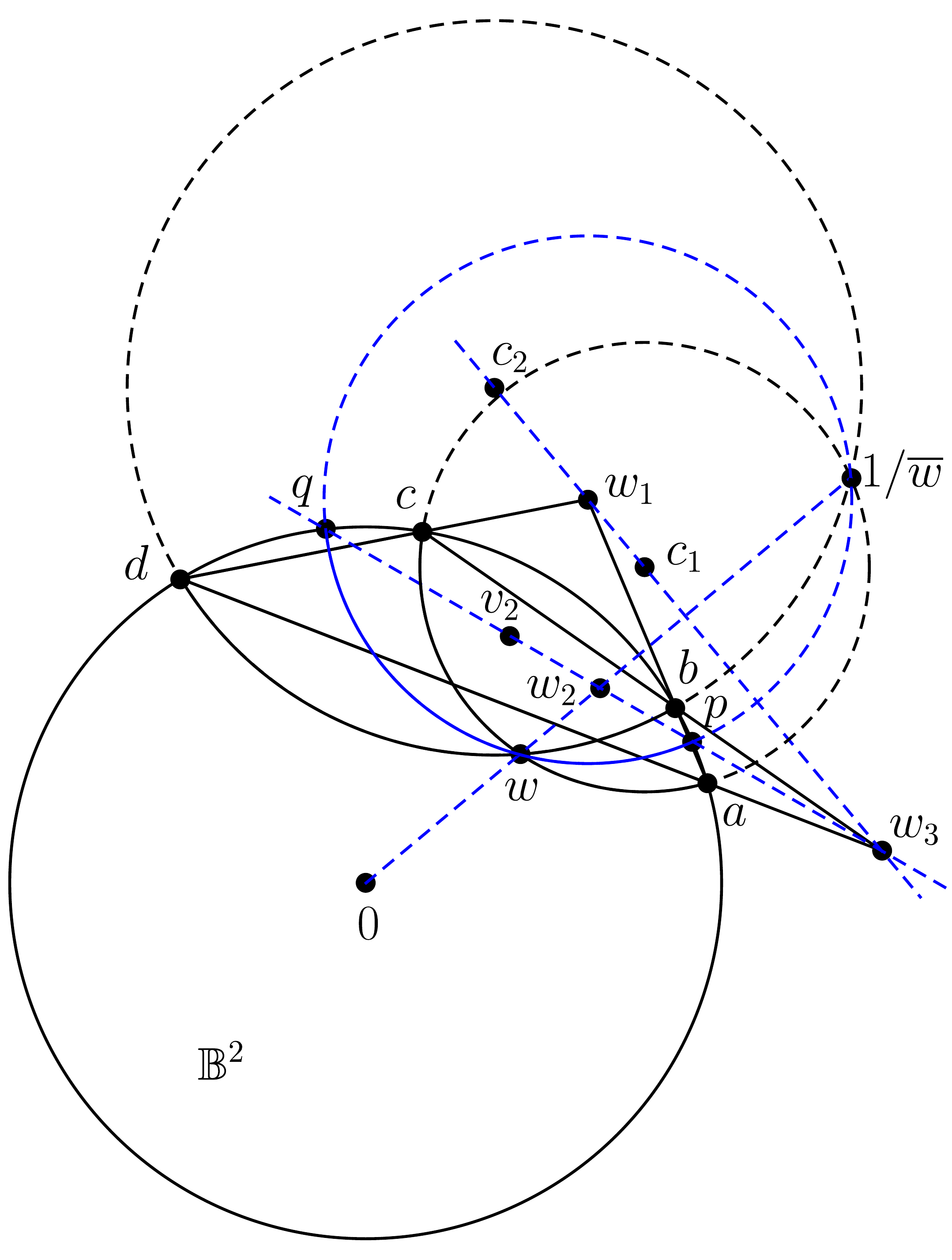}
\caption{\label{cyclicfig311b}  }
\end{figure}
%==========================================================================

%It is easy to see that the centers of the circles through $w$ orthogonal to the unit circle are all located  on the bisecting normal  $L[w_1,w_2]$ to the segment %$[w,1/\overline{w}]$.

%Let $C$ be any circle orthogonal to the unit circle with center $cen\in L[w_1,w_3]$\,.
%Since the circle centered at $cen$ with radius $|cen-w|$ is orthogonal to the unit circle and the circle satisfying the assumption is unique,
%we have $w\in C$.

%This completes the proof.
\end{proof}
%===========================================================================================

%===========================================================================================
\begin{rem}\label{myoccen1}
Let $C_o$ be the circle orthogonal to the unit circle centered at $w_1$ and
denote its points of intersection with the unit circle by
\begin{equation}\label{myisecpts5}
\{p,q\}= C_o\cap \partial \B\,.
\end{equation}
By Remark \ref{myuv}, $w_2\in[w_3,v_2]$ and $L[w_3,v_2]$ is perpendicular to $L[0,w_1]$.
By Theorem \ref{myoccen}, $w\in J^*[p,q]$. By Remark \ref{myhmidps} and \cite[Proposition 3.1]{vw2}, $w_2\in[p,q]$.
Since $L[p,q]$ is also perpendicular to $L[0,w_1]$, we see that the five points $w_3, p, w_2, v_2, q$ are collinear, see Figure~\ref{cyclicfig311b}.
\end{rem}
%===========================================================================================

%===========================================================================================
\begin{thm}\label{mythoccen}
Let $p\,,q$ be as \eqref{myisecpts5}.
The hyperbolic line $J^*[p,q]$ is the angular bisector of the angle $\angle(J^*[a,c], w, J^*[b,d])$.
\end{thm}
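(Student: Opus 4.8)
The goal is to show that the hyperbolic line $J^*[p,q]$ bisects the angle formed at $w$ by the two hyperbolic lines $J^*[a,c]$ and $J^*[b,d]$. The plan is to exploit the symmetry already uncovered in the preceding results, together with the fact that hyperbolic angles between circular arcs orthogonal to the unit circle coincide with the euclidean angles at their point of intersection. First I would recall from Remark~\ref{myhmidps} that $w=J^*[a,c]\cap J^*[b,d]$ is the hyperbolic midpoint of $J[0,w_2]$, and from Remark~\ref{myoccen1} that $w$ lies on $J^*[p,q]$, where $C_o=C[p,q]$ is the circle orthogonal to the unit circle centered at $w_1$. Thus all three hyperbolic lines $J^*[a,c]$, $J^*[b,d]$, $J^*[p,q]$ pass through $w$, so the angular-bisector claim is well posed.

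The main idea is to transport the configuration to a symmetric normal form by a hyperbolic isometry. I would apply the M\"obius transformation $T_w$ of \eqref{myTa} (composed if necessary with a rotation) that sends $w$ to the origin $0$. Since $T_w$ is a hyperbolic isometry preserving angles and mapping hyperbolic lines to hyperbolic lines, the three arcs $J^*[a,c]$, $J^*[b,d]$, $J^*[p,q]$ are carried to three euclidean diameters through $0$ (hyperbolic lines through the origin are diameters). The claim then reduces to the statement that the image diameter of $J^*[p,q]$ bisects the euclidean angle between the image diameters of $J^*[a,c]$ and $J^*[b,d]$. Equivalently, writing the images of $a,c$ and $b,d$ on $\partial\B$, I would show that the endpoints $T_w(p),T_w(q)$ of the third diameter are the angular midpoints of the two arcs determined by $\{T_w(a),T_w(c)\}$ and $\{T_w(b),T_w(d)\}$.

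To pin this down I would use the defining property of $p,q$ together with Theorem~\ref{myoccen}: the center $w_1$ of $C_o$ lies on $L[w_1,w_3]$, which by Theorem~\ref{myoccen} is exactly the locus of centers of orthogonal circles through $w$; moreover from Theorem~\ref{myoarcisec}(1) the point $w_2$ is the orthocenter of $\triangle(0,w_1,w_3)$, so $L[0,w_1]\perp L[w_3,w_2]$ and the collinearity $w_3,p,w_2,v_2,q$ from Remark~\ref{myoccen1} shows that $[p,q]$ is the chord of $C_o$ perpendicular to $L[0,w_1]$ through $w_2$. The symmetry I want is the reflection in the line $L[0,w_1]$: I expect that this euclidean reflection (an isometry fixing $0$ and the unit circle) interchanges the pair $\{a,c\}$-arc with the $\{b,d\}$-arc in the appropriate way while fixing $C_o$ and hence the pair $\{p,q\}$, and fixing $w$. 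Because a reflection fixing $w$ and swapping $J^*[a,c]\leftrightarrow J^*[b,d]$ must send the bisector of $\angle(J^*[a,c],w,J^*[b,d])$ to itself, identifying that reflection-invariant hyperbolic line through $w$ with $J^*[p,q]$ finishes the argument.

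The hard part will be verifying that the reflection in $L[0,w_1]$ really does swap the two hyperbolic lines $J^*[a,c]$ and $J^*[b,d]$ (equivalently, swaps the orthogonal circles $C[a,c]$ and $C[b,d]$) while preserving $C_o$; this is where a short computation with the explicit formulas \eqref{myOac}, \eqref{ULIS}, \eqref{myadbc} for the centers $c_1=m(a,c)$, $c_2=m(b,d)$, $w_1$, $w_3$ is needed, checking that $c_1$ and $c_2$ are mirror images across $L[0,w_1]$ and that $w_1$ is fixed. Once the reflection symmetry is established, the bisector conclusion is forced by the uniqueness of the reflection-invariant line through the fixed point $w$, and no delicate angle computation remains.
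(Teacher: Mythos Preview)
Your overall strategy---find a conformal involution of $\B$ that fixes $w$ and $J^*[p,q]$ while interchanging $J^*[a,c]$ with $J^*[b,d]$---is exactly the right idea, and it is also what the paper does. The gap is in the choice of involution: the euclidean reflection in $L[0,w_1]$ does \emph{not} have the properties you claim. First, $w$ lies on $L[0,w_2]$ (it is the hyperbolic midpoint of $J[0,w_2]$ by Remark~\ref{myhmidps}), not on $L[0,w_1]$; in a generic configuration $0,w_1,w_2$ are not collinear, so the reflection across $L[0,w_1]$ moves $w$. Second, by the proof of Theorem~\ref{myoccen} the centers $c_1=m(a,c)$ and $c_2=m(b,d)$ both lie on the line $L[w_1,w_3]$; for your reflection to swap $c_1\leftrightarrow c_2$ one would need $L[0,w_1]\perp L[w_1,w_3]$ and $w_1$ to be the midpoint of $[c_1,c_2]$, neither of which holds in general (the first would force a right angle at $w_1$ in the triangle $0,w_1,w_3$). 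So the computation you flag as ``the hard part'' cannot succeed.

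The paper's fix is to replace the line reflection by the \emph{inversion in $C_o$} itself. By the tangent--secant theorem applied at $w_1$ (which lies on both $L[a,b]$ and $L[c,d]$),
\[
|w_1-a|\,|w_1-b| \;=\; |w_1|^2-1 \;=\; |w_1-c|\,|w_1-d| \;=\; |w_1-q|^2,
\]
so $a,b$ and $c,d$ are inverse pairs with respect to $C_o$. This inversion therefore sends $J^*[a,c]$ to $J^*[b,d]$, fixes $\partial\B$ (orthogonality), and fixes $C_o$---hence $J^*[p,q]$---pointwise, in particular fixing $w\in J^*[p,q]$. Angle preservation then yields the bisector statement immediately. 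Your first paragraph (the $T_w$-normalisation) is fine but unnecessary once the correct involution is identified.
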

%===========================================================================================
%===========================================================================================
\begin{proof} Let $C_o$ be as in Remark \ref{myoccen1}.
Tangent-secant theorem yields
$$|w_1|^2-1=|w_1-q|^2=|w_1-a||w_1-b|=|w_1-c||w_1-d|\,.$$
Then $a$ and $b$ are a pair of inverse points with respect to $C_o$, and so are $c$ and $d$.
Hence the inversion in $C_o$ maps $J^*[a,c]$ onto $J^*[b,d]$ and $J^*[p,q]$ onto itself, see Figure \ref{cyclicfig311b}.

Therefore, $J^*[p,q]$ is the angular bisector of the angle $\angle(J^*[a,c], w, J^*[b,d])$ since M\"obius transformations preserve angles.
\end{proof}
%===========================================================================================

%===========================================================================================
\begin{rem}
Since $|w_1-w|=|w_1-q|$, the proof of Theorem \ref{mythoccen} yields that
$C[a,b,w]$ is tangent to $C[c,d,w]$ at the point $w$.
\end{rem}
%===========================================================================================

%%%%%%%%%%%%%%%%%%%%%%%%%%%%%%%%%%%%%
%%%%%%%%%%%%%%%%%%%%%%%%%%%%%%%%%%%%%
%%%%%%%%%%%%%%%%%%%%%%%%%%%%%%%%%%%%%
%\section{Ideal hyperbolic quadrilaterals and euclidean lines}
\section{Proofs of Main Results}
%%%%%%%%%%%%%%%%%%%%%%%%%%%%%%%%%%%%%
%%%%%%%%%%%%%%%%%%%%%%%%%%%%%%%%%%%%%
%%%%%%%%%%%%%%%%%%%%%%%%%%%%%%%%%%%%%

%===========================================================================================
\begin{nonsec}{\bf Proof of Theorem  \ref{euclamb}.}
By \cite[(4.1) in the proof of Corollary 1.4]{vw2},
$$d_1=2\,{\rm arth}\,\sqrt{\frac 1{|a,b,c,d|}}=2\,{\rm arth}\,r,\,\, d_2=2\,{\rm arth}\,\sqrt{\frac 1{|d,a,b,c|}}=2\,{\rm arth}\,r',$$
where $r^2+r'^2=1$.
Clearly,
$${\rm th}^2\frac{d_1}2+{\rm th}^2\frac{d_2}2=1.$$

By \eqref{LIS}, we have
\begin{align*}
t_2=L[w_1,w_2]\cap L[b,c]
%&=\frac{(\overline{w_1}{w_2}-{w_1}\overline{w_2})(b-c)-(w_1-w_2)(\overline{b}c-b\overline{c})}
%{(\overline{w_1}-\overline{w_2})(b-c)-(w_1-w_2)(\overline{b}-\overline{c})}\\
%&=\frac{bc(\overline{w_1}{w_2}-{w_1}\overline{w_2})+(b+c)(w_1-w_2)}{bc(\overline{w_1}-\overline{w_2})+(w_1-w_2)}\\
&=\frac{2 bc (ad-bc)+(b+c)(bc(a+d)-ad(b+c))}{bc((a+d)-(b+c))+(bc(a+d)-ad(b+c))}\,.
\end{align*}
Hence
\begin{align*}
\frac{|w_2-t_2|^2}{1-|t_2|^2}
&=\frac{(w_2-t_2)(\overline{w_2}-\overline{t_2})}{1-t_2 \overline{t_2}}\\
&=-\frac{(a-b)(c-d)((a+d)-(b+c))(bc(a+d)-ad(b+c))}{(a-c)(b-d)(ac-bd)^2}\,.
\end{align*}

In a similar way, we have
$$t_4=L[w_1,w_2]\cap L[a,d]=\frac{2 ad (ad-bc)+(a+d)(bc(a+d)-ad(b+c))}{ad((a+d)-(b+c))+(bc(a+d)-ad(b+c))}$$
and hence
$$\frac{|w_2-t_4|^2}{1-|t_4|^2}=\frac{|w_2-t_2|^2}{1-|t_2|^2}\,.$$

By \eqref{myrho}, we have
$$\sh^2\frac12{\rho_{\mathbb{B}^2}(w_2,t_2)}=\sh^2\frac12{\rho_{\mathbb{B}^2}(w_2,t_4)}=\frac{((a+d)-(b+c))(bc(a+d)-ad(b+c))}{(a-c)(a-d)(b-c)(b-d)}\,.$$

Since
$$\sh^2\frac12{\rho_{\mathbb{B}^2}(t_2,t_4)}=\frac{4((a+d)-(b+c))(bc(a+d)-ad(b+c))}{(a-d)^2(b-c)^2}\,,$$
%$$\rho_{\B}(w_2,t_3)=\rho_{\B}(w_2,t_1),$$
we obtain
$$\frac{{\rm sh}\,\frac12\rho_{\B}(w_2,t_2)+{\rm sh}\,\frac12\rho_{\B}(w_2,t_4)}{{\rm sh}\,\frac12\rho_{\B}(t_2,t_4)}=\sqrt{\frac1{|a,b,c,d|}}={\rm th}\,\frac{d_1}2\,.$$

A similar argument yields
$$\frac{{\rm sh}\,\frac12\rho_{\B}(w_2,t_1)+{\rm sh}\,\frac12\rho_{\B}(w_2,t_3)}{{\rm sh}\,\frac12\rho_{\B}(t_1,t_3)}=\sqrt{\frac1{|d,a,b,c|}}={\rm th}\,\frac{d_2}2\,.$$

This completes the proof.
\hfill $\square$
\end{nonsec}
%===========================================================================================

%===========================================================================================
\begin{rem}
From the proof of Theorem \ref{euclamb}, it is easy to see that
$$\rho_{\B}(w_2,t_1)=\rho_{\B}(w_2,t_3)\,\,\,{\rm and}\,\,\,\rho_{\B}(w_2,t_2)=\rho_{\B}(w_2,t_4)\,.$$
\end{rem}
%===========================================================================================

%===========================================================================================
\begin{nonsec}{\bf Proof of Theorem  \ref{myhmidp}.}
Let $w=J^*[a,c]\cap J^*[b,d]$ and $w_2=L[a,c]\cap L[b,d]$, see Figure~\ref{cyclicfig31} or Figure~\ref{cyclicfig311b}.
By \eqref{mysympt}, we have
\begin{equation*}%\label{wwbar}
 \begin{cases}
{\displaystyle {w+\frac{1}{\overline{w}}=\frac{2(ac-bd)}{a-b+c-d}} } &\\
{\displaystyle { \frac{w}{\overline{w}}=\frac{ac(b+d)-bd(a+c)}{a-b+c-d}}\,,}&
 \end{cases}
\end{equation*}
and hence
\begin{equation*}\label{wwbar}
 w_2=\frac{2w}{1+|w|^2}\,.
\end{equation*}
A simple calculation yields
\begin{equation*}\label{wwbar}
 w=\frac{w_2}{1+\sqrt{1-|w_2|^2}}\,.
\end{equation*}

%==========================================================================
\begin{figure}[H]
\centering
\includegraphics[width=8cm]{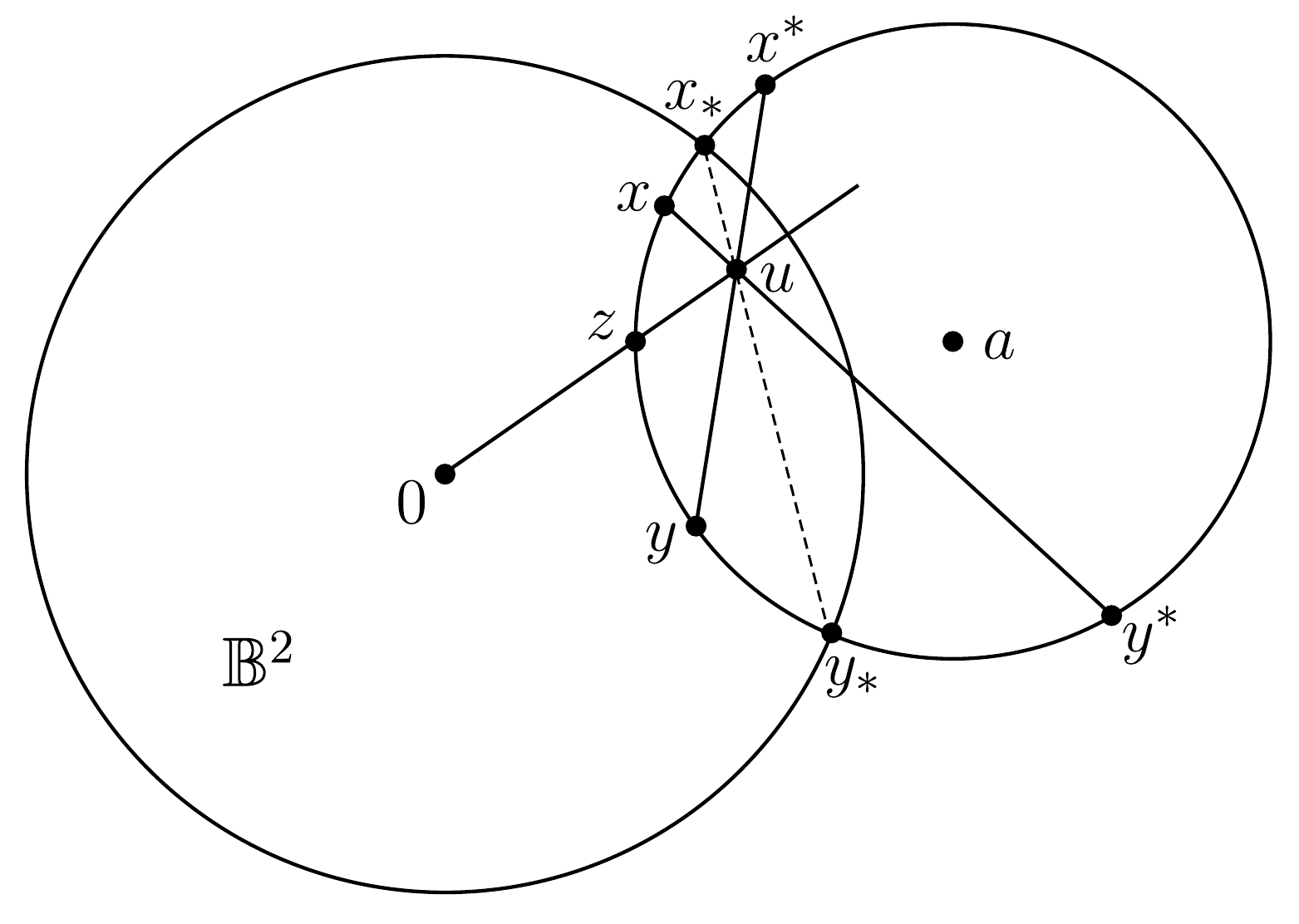}
\caption{\label{bgsfig43b} The hyperbolic midpoint $z$ of $J[x,y]$ is also the hyperbolic midpoint of $J[0,u]$. }
\end{figure}
%==========================================================================

By Remark \ref{myhmidps}, the point $w$ is the hyperbolic midpoint of $J[0,w_2]$.
By \cite[Proposition 3.1]{vw2}, the hyperbolic midpoint $z$ of $J[x,y]$ is also the hyperbolic midpoint of $J[0,u]$ (see Figure~\ref{bgsfig43b}), where
\begin{equation}\label{point-u}
u= LIS[x, y^* ,y, x^*]=LIS[0, z, x_*, y_*]= \frac{y(1-|x|^2)+x(1-|y|^2)}{1-|x|^2|y|^2}\,,
\end{equation}
see \cite[Lemma 4.6(2)]{vw1}.
Then
\begin{equation*}\label{wwbar}
z=\frac{u}{1+\sqrt{1-|u|^2}}\,.
\end{equation*}
These formulas together with \eqref{myahl} imply \eqref{myzformua}.
\hfill $\square$
\end{nonsec}
%===========================================================================================

%===========================================================================================
\begin{rem}
Let $0,x,y$ be non-collinear points in the unit disk $\mathbb{B}^2\,$ and let $z$ be the hyperbolic midpoint of the hyperbolic geodesic segment joining $x$ and $y\,.$
As shown in \cite[Fig.12-16]{vw1}, the five points
$$k=LIS[x_*,x^*,y_*,y^*]\,, \quad v=LIS[x,x_*,y,y_*]\,, \quad s=LIS[x,y_*,y,x_*]\,, $$
$$u=LIS[x,y^*,y,x^*]\,, \quad t=LIS[x_*,y^*,y_*,x^*]$$
are all on the line through the origin  and the point $z\,.$
%==========================================================================
\begin{figure}[H]
\centering
\includegraphics[width=8cm]{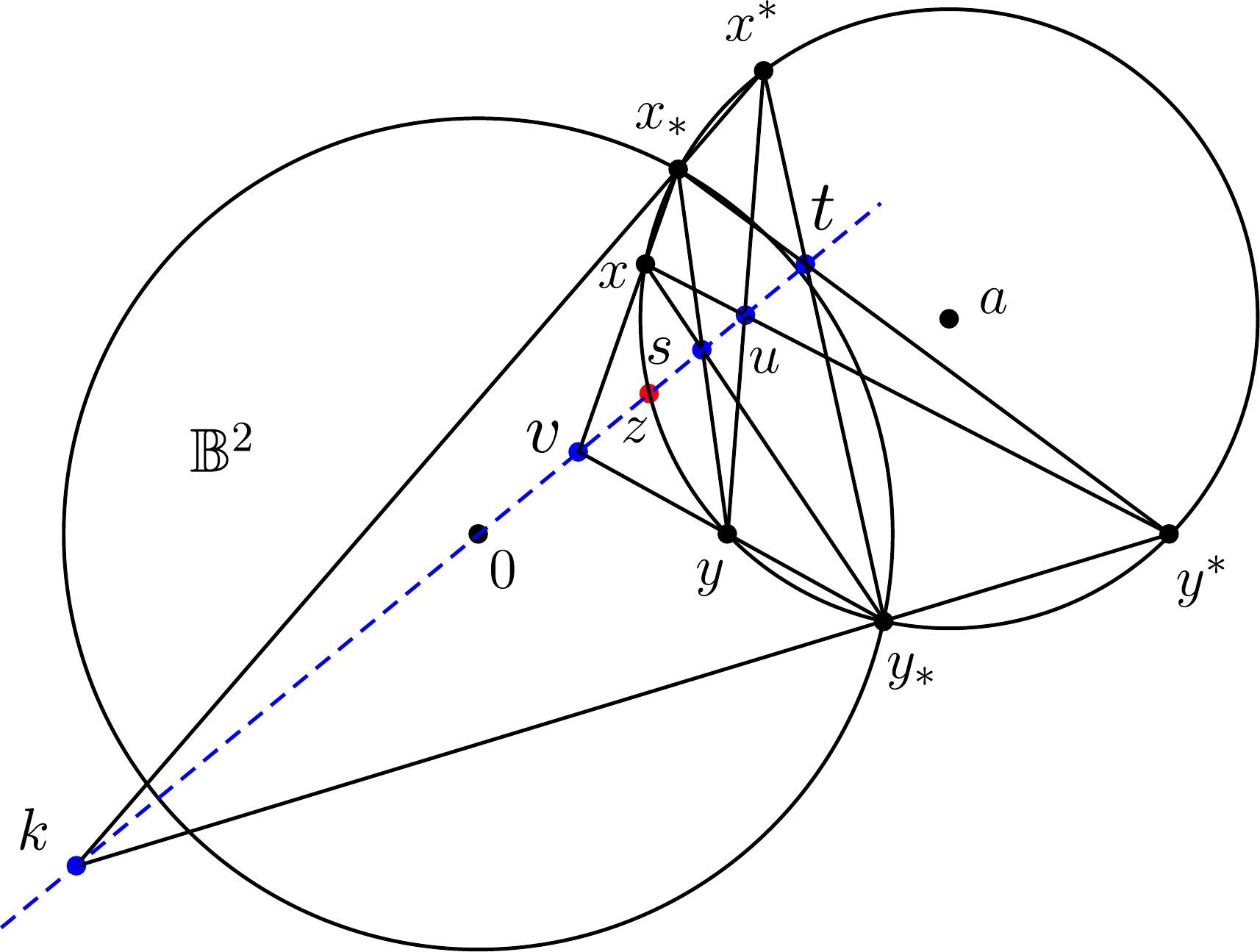}
\caption{\label{cyclicsymwholeb} Five collinear points.}
\end{figure}
%==========================================================================
%Therefore, each of these five points is of the form $tz, t \in \mathbb{R}\setminus \{0\}\,.$
As the proof of Theorem \ref{myhmidp} shows, we have an explicit formula for $z$ in terms of $u$,
but finding similar formulas for the other points seems to lead to tedious computations.
\end{rem}
%===========================================================================================

%===========================================================================================
\begin{rem}
In this paper we treated problems in hyperbolic geometry analytically. However, as the referee
pointed out  to us, there is an approach to hyperbolic
geometry that is fully analogous to the common vector space approach to Euclidean geometry.
Indeed, Ungar \cite{ungar1,ungar2} introduced the notion of a gyrovector space
and developed its theory and applications in many papers and books, see \cite{ungar1,ungar2, ungar3} for further
references. The gyrovector formalism provides a framework
 which enables one  to operate with hyperbolic geometric objects in the style of linear algebra.
In particular, there is  an elegant formula for the gyromidpoint $z$ of two points $x,y\in \mathbb{B}^2$ (see \cite[(6.91)]{ungar1} or \cite{ungar3}):
$$z=\frac{1}{2}\otimes(x\boxplus y),$$
where $\frac{1}{2}\otimes x$ is defined as $\frac{1}{2}\otimes x \oplus \frac{1}{2}\otimes x =x$, and
 $x\boxplus y=x\oplus[(\ominus x\oplus y)\oplus x]$ with the M\"obius addition $x\oplus y=\frac{x+y}{1+\bar{x}y}$ and $\ominus x=-x$.
 Calculations yield that
 \begin{equation}\label{gyro-half}
 \frac{1}{2}\otimes x=\frac{x}{1+\sqrt{1-|x|^2}}
 \end{equation}
 and
 \begin{equation}\label{gyroadd}
 x\boxplus y= \frac{y(1-|x|^2)+x(1-|y|^2)}{1-|x|^2|y|^2}.
 \end{equation}
 Combining \eqref{gyro-half} and \eqref{gyroadd}, we obtain
 $$\frac{1}{2}\otimes(x\boxplus y)=\frac{y(1-|x|^2) + x(1-|y|^2)}{1-|x|^2|y|^2 + A[x,y] \sqrt{(1-|x|^2)(1-|y|^2)}},$$
 which is coincident with the formula in Theorem \ref{myhmidp}.
 Hence we conclude that the gyromidpoint of $x$ and $y$ is the hyperbolic midpoint of these two points.
 According to the formulas \eqref{point-u} and \eqref{gyroadd}, we see that $u=x\boxplus y$ and our Figure \ref{bgsfig43b} gives a Euclidean geometric construction for the gyroaddition $x\boxplus y$ (also see \cite{vw1}).
\end{rem}
%===========================================================================================

\begin{comment}
%===========================================================================================
\begin{rem}
As shown in \cite[Figure12-16]{vw1}, the points
$LIS[x,y^*,y,x^*]\,,$ $LIS[x,x_*,y,y_*]\,,$ $LIS[x,y_*,y,x_*]\,,$ $LIS[x_*,y^*,y_*,x^*]\,,$ $LIS[x_*,x^*,y_*,y^*]$
are all on the line through the origin and the hyperbolic midpoint of the hyperbolic geodesic segment joining $x$ and $y$,
but finding explicit formulas for these five points comparable to Theorem \ref{myhmidp} seems to lead tedious computation.
%In \cite{vw1}, the authors provided five constructions based on five collinear points to find the midpoint of the hyperbolic geodesic segment joining two given points in $\B$.
%In the proof of Theorem  \ref{myhmidp}, we used the point $u$ which is one of the five collinear points and has the special property $\rho_{\B}(0,u)=2\rho_{\B}(0,z)$ while the %other four collinear points do not have such simple property.
\end{rem}
%===========================================================================================
\end{comment}

%%%%%%%%%%%%%%%%%%%%%%%%%%%%%%%%%%%%%
%%%%%%%%%%%%%%%%%%%%%%%%%%%%%%%%%%%%%
%%%%%%%%%%%%%%%%%%%%%%%%%%%%%%%%%%%%%
\section{Normalisation of quadruples}
%%%%%%%%%%%%%%%%%%%%%%%%%%%%%%%%%%%%%
%%%%%%%%%%%%%%%%%%%%%%%%%%%%%%%%%%%%%
%%%%%%%%%%%%%%%%%%%%%%%%%%%%%%%%%%%%%

Quadruples of points $a,b,c,d\in \overline{\mathbb{C}}$ in general positions are often more convenient to handle
if they are normalised, brought by a preliminary M\"obius transformation to a canonical position.
The most commonly used reduction to a canonical position is to
find a M\"obius transformation $h$ such that the points are mapped like this
\begin{equation}\label{cano1}
(a,b,c,d)\mapsto (h(a),h(b),h(c), h(d))= (0,1,p,\infty)\,,   \quad p>1\,.
\end{equation}
%and $p$ is uniquely determined if we require $Im(p) >0\,.$
Another possibility is to
find a M\"obius transformation $g$ mapping the points to positions symmetric with respect to the origin (see e.g., \cite[Lemma 7.24]{avv} for a formula for $y$)
\begin{equation}\label{cano2}
(a,b,c,d)\mapsto (g(a),g(b),g(c), g(d))= (-1,-y,y,1)\,,   \quad |y|\le 1\,.
\end{equation}

Both of  these normalisations \eqref{cano1} and \eqref{cano2} preserve the cross ratio of the quadruples,
but in general change euclidean or chordal distances.
We can measure the change of distances utilising the Lipschitz constant \eqref{myLip1}.
For each of the canonical forms we can indicate the "cost" of normalisation as the Lipschitz constant of the normalising map.
The Lipschitz constant of every M\"obius transformation is a finite number \cite[Theorem 3.6.1]{b}.

Here we will study the normalisation in the special case
when the points of the quadruple are on the unit circle
and we will emphasise a symmetrization procedure from euclidean geometry.
We have two canonical positions:
the normalised quadruple is
(a) symmetric with respect to a diameter of the unit disk
or
(b) symmetric with respect to the origin.
It is clear that if (b) holds, then also (a) holds.

To see the existence of such kind of normalisation, we can use an appropriate hyperbolic rotation around the point of intersection of the hyperbolic lines which is a composition of two canonical mappings of the unit disk and a rotation around the origin as shown in Figure~\ref{cyclicsymwholeb}.

%==========================================================================
\begin{figure}[H]
\centering
\includegraphics[width=16cm]{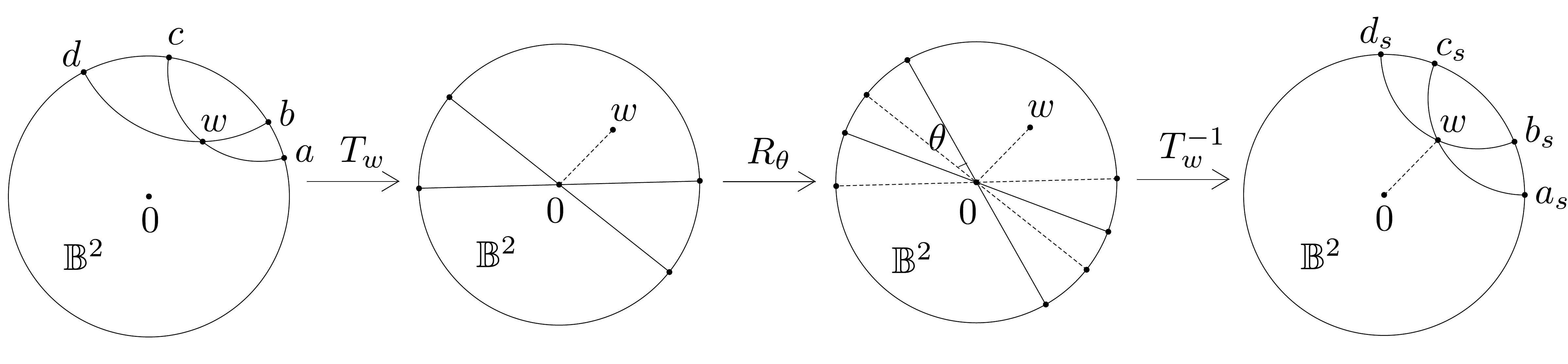}
\caption{\label{cyclicsymwholeb} Hyperbolic rotation}
\end{figure}
%==========================================================================

As an application of the results from Section \ref{go}, mainly from Theorem \ref{myoccen} and Theorem \ref{mythoccen},
we provide a euclidean construction of the symmetrization of random four points $a\,,b\,,c\,,d\in\partial\B$ with respect to a diameter, see Figure \ref{cyclicsymfiga} and Figure \ref{cyclicsymfigb}.

%==========================================================================
\begin{figure}[h]
\begin{minipage}[t]{0.45\linewidth}
\centering
\includegraphics[width=7.2cm]{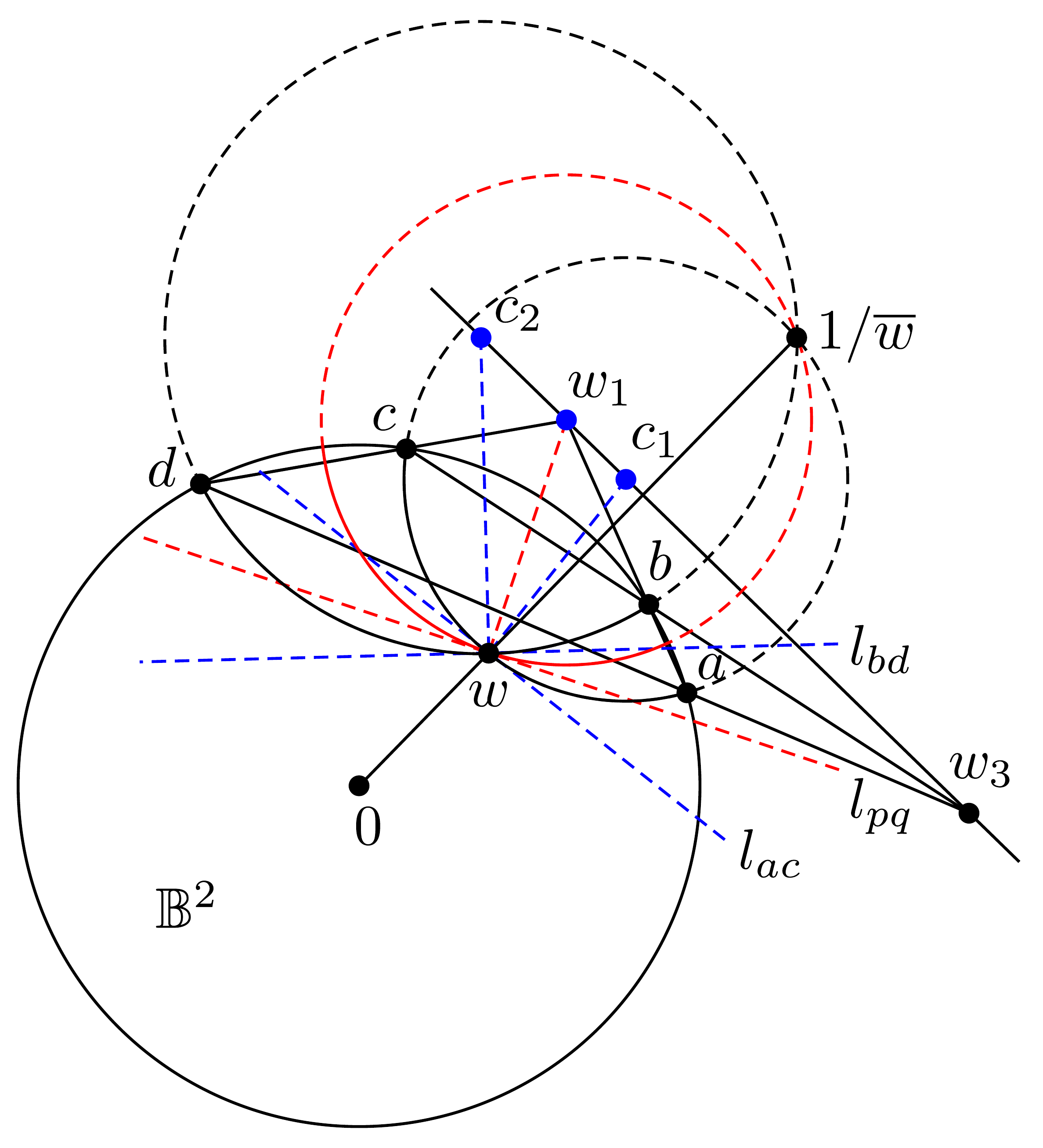}
\caption{\label{cyclicsymfiga}}
\end{minipage}
\hfill
\begin{minipage}[t]{0.45\linewidth}
\centering
\includegraphics[width=7cm]{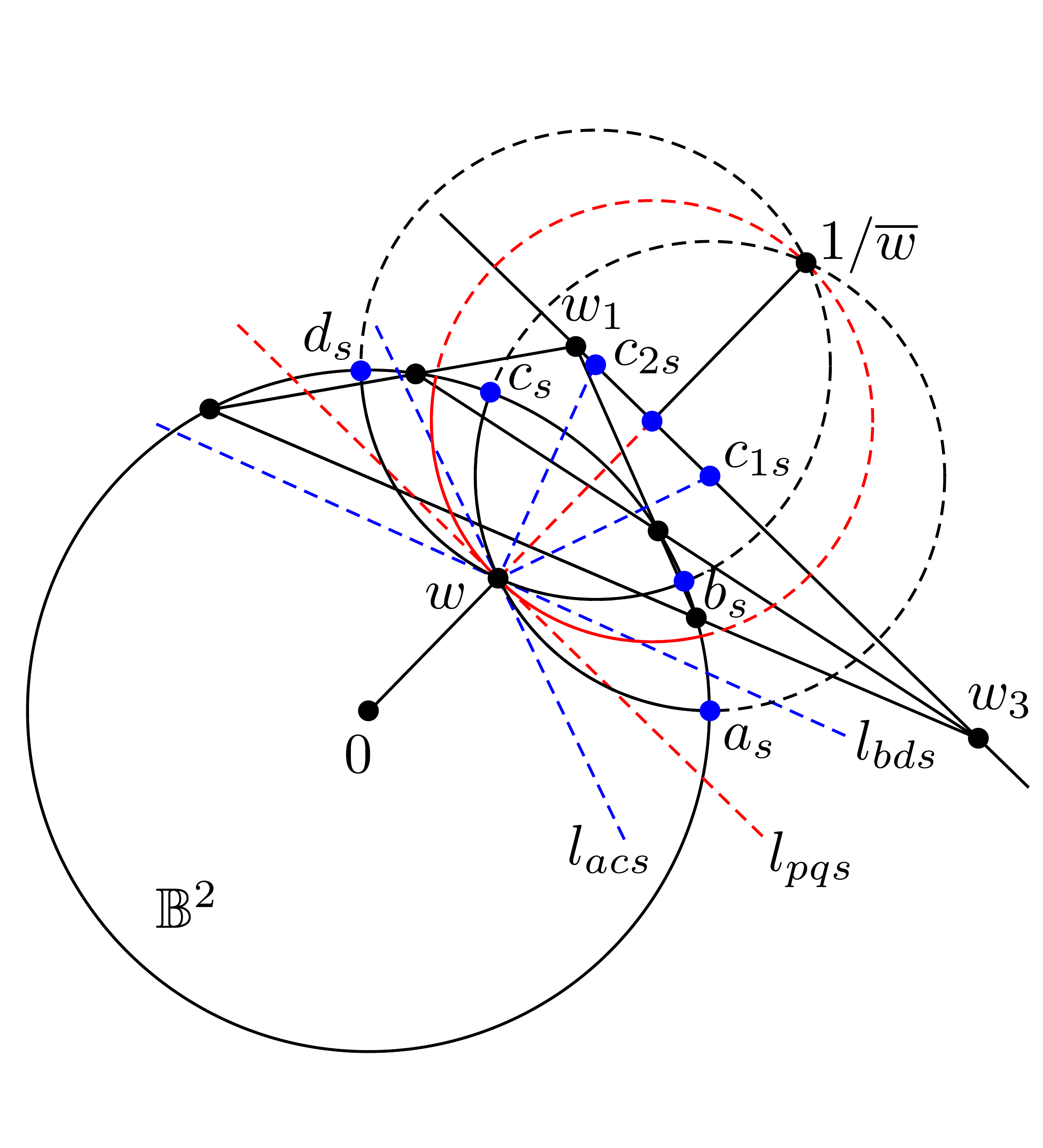}
\caption{\label{cyclicsymfigb}}
\end{minipage}
\end{figure}
%==========================================================================

%===========================================================================================
\begin{enumerate}
\item Find the points $w_1=LIS[a,b,c,d]$, $w_3=LIS[a,d,b,c]$, and $w=J^*[a,c]\cap J^*[b,d]$;
\item Construct the lines $L[w_1,w_3]$ and $L[0,w]$;
%\item Construct the circle $Y$ centered at $w_1$ with radius $|w_1-w|$ intersecting the unit circle at points $p\,,q$;
\item Find the centers $c_1$ and $c_2$ of the circles $C[a,c]$ and $C[b,d]$ on $L[w_1,w_3]$, respectively;
\item Construct the lines $l_{ac}\,,l_{pq}\,,l_{bd}$ passing through the point $w$ such that they are perpendicular to $[c_1,w]\,,[w_1,w]\,,[c_2,w]$, respectively;
\item Rotate the three lines $l_{ac}\,,l_{pq}\,,l_{bd}$ with respect to the point $w$ to the lines $l_{acs}$, $l_{pqs}$, $l_{bds}$, respectively,
 such that $l_{pqs}$ is perpendicular to $L[0,w]$;
\item Find the points $c_{1s}$ and $c_{2s}$ on $L[w_1,w_3]$ by constructing the lines passing through $w$ and orthogonal to $l_{acs}$ and $l_{bds}$, respectively;
\item Construct the circle centered at $c_{1s}$ with radius $|c_{1s}-w|$ intersecting the unit circle at points $a_s$ and $c_s$;
\item Construct the circle centered at $c_{2s}$ with radius $|c_{2s}-w|$ intersecting the unit circle at points $b_s$ and $d_s$.
\end{enumerate}

Then the points $a_s\,,b_s\,,c_s\,,d_s$ are the symmetric points of $a\,,b\,,c\,,d$. Specifically, the points $a_s$ and $d_s$ are symmetric with respect to the diameter through $w$, and so are $b_s$ and $c_s$.
%===========================================================================================

\medskip

%%%%%%%%%%%%%%%%%%%%%%%%%%%%%%%%%%%%%%%%%%%%%%%%%%%%%%%
\subsection*{Acknowledgments.}
This work was supported by the National Natural Science Foundation
of China (NNSFC) under Grant No.11771400 and No.11601485,
the joint mobility project of the National Natural Science Foundation of China and the Academy of Finland under Grant No.11911530457,
and Science Foundation of Zhejiang Sci-Tech University (ZSTU) under Grant No.16062023-Y. G. Wang and X. Zhang are indebted to the
Department of Mathematics and Statistics at University of Turku for its hospitality during their stay there.
M. Vuorinen is indebted to the
Department of Mathematical Sciences at Zhejiang Sci-Tech University for its hospitality during his visit there.
The authors also thank the anonymous referee for his/her valuable comments and suggestions.
%%%%%%%%%%%%%%%%%%%%%%%%%%%%%%%%%%%%%%%%%%%%%%%%%%%%%%%

\medskip

%%%%%%%%%%%%%%%%%%%%%%%%%%%%%%%%%%%%%%%%%%%%%%%%%%%%%%%
\bibliographystyle{siamplain}
%\bibliography{Bref}

\begin{thebibliography}{CHKV}

%=========================================================================================
\bibitem[AVV]{avv}
\textsc{G.\,D. Anderson, M.\,K. Vamanamurthy, and M. Vuorinen},
Conformal Invariants, Inequalities, and Quasiconformal Maps,
John Wiley \& Sons, New York, 1997.

%=========================================================================================
\bibitem[B]{b}
{\sc  A.\,F. Beardon},
The Geometry of Discrete Groups,
Graduate Texts in Math., Vol. 91, Springer-Verlag, New York, 1983.

%\bibitem[M]{m} {\small
%{\sc  Mathematica }: Circles20180401. Notebook}

%=========================================================================================
\bibitem[G]{g}
\textsc{C. Goodman-Strauss},
{\it Compass and straightedge in the Poincar\'e disk},
Amer. Math. Monthly 108 (2001), 38--49.


%=========================================================================================
\bibitem[KV]{kv}
\textsc{R. Kl\'en and M. Vuorinen},
{\it Apollonian circles and hyperbolic geometry}, 
J. Analysis 19 (2011), 41--60.

%=========================================================================================
\bibitem[U1]{ungar1}
\textsc{A. A. Ungar}, 
Analytic Hyperbolic Geometry: Mathematical Foundations and Applications,
World Scientific Publishing Co. Pte. Ltd., Hackensack, NJ, 2005.

%=========================================================================================
\bibitem[U2]{ungar2}
\textsc{A. A. Ungar}, 
{\it From M\"obius to gyrogroups},
 Amer. Math. Monthly 115 (2008), 138--144.

%=========================================================================================
\bibitem[U3]{ungar3}
\textsc{A. A. Ungar}, 
{\it  Midpoints in gyrogroups}, 
Found. Phys. 26 (1996), 1277--1328.

%=========================================================================================
\bibitem[VW1]{vw2}
{\sc  M. Vuorinen and G. Wang},
{\it Hyperbolic Lambert quadrilaterals and quasiconformal maps,}
 Ann. Acad. Sci. Fenn. Math. 38 (2013), 433--453.
%http://arxiv.org/abs/1203.6494
%doi:10.5186/aasfm.2013.3845,
%{arXiv:1203.6494 math.CA}}

%=========================================================================================
\bibitem[VW2]{vw1}
{\sc M. Vuorinen and G. Wang},
{\it Bisection of geodesic segments in hyperbolic geometry,}
Complex Analysis and Dynamical Systems V, 273--290,
Contemp. Math., 591, Israel Math. Conf. Proc., Amer. Math. Soc., Providence, RI, 2013.

%=========================================================================================
\bibitem[W]{w}
{\sc  G. Wang},
{\it The adjacent sides of hyperbolic Lambert quadrilaterals,}
Bull. Malays. Math. Sci. Soc. 41 (2018), 1995--2010.

\end{thebibliography}

%%%%%%%%%%%%%%%%%%%%%%%%%%%%%%%%%%%%%%%%%%%%%%%%%%%%%%%

\end{document}